\numberwithin{equation}{section}
\definecolor{citegreen}{rgb}{0.1,0.4,0.1}
\definecolor{refred}{rgb}{0.5,0,0}
\newcommand{\R}{\mathbb{R}}
\newcommand{\Sph}{\mathbb{S}}
\def\DDD{{\rm D}}
\def\HHH{{\rm H}}
\def\RRR{{\mathrm R}}
\def\a{\alpha}
\newcommand{\pa}{\partial}
\newcommand{\Om}{\Omega}
\newcommand{\ffi}{\varphi}
\newcommand{\ep}{\varepsilon}
\newcommand{\rmd}{{\rm d}}
\newcommand{\Ric}{{\rm Ric}}
\newcommand{\D}{{\rm D}}
\newcommand{\DD}{{\rm D}^2}
\newcommand{\De}{\Delta}
\newcommand{\ho}{{\rm h}}
\newcommand{\Ho}{{\rm H}}
\newcommand{\g}{g}
\newcommand{\Ricg}{{\rm Ric}_g}
\newcommand{\na}{\nabla}
\newcommand{\nana}{\nabla\nabla}
\newcommand{\Deg}{\Delta_g}
\newcommand{\chg}{{\rm h}^{(g)}}
\newcommand{\Hg}{{\rm H}_g}
\newcommand{\hhh}{{\rm h}}
\mathchardef\emptyset="001F
\definecolor{vgreen}{rgb}{0.1,0.5,0.2}
\definecolor{viola}{RGB}{85,26,139}
\newtheorem{theorem}{Theorem}[section]
\newtheorem{remark}{Remark}
\newtheorem{corollary}[theorem]{Corollary}
\newtheorem{proposition}[theorem]{Proposition}
\newtheorem{lemma}[theorem]{Lemma}
\begin{document}

\hyphenation{ma-ni-fold}

\title[Monotonicity formulas in potential theory]
{Monotonicity formulas in potential theory}

\author[V.~Agostiniani]{Virginia Agostiniani}
\address{V.~Agostiniani, Universit\`a degli Studi di Verona,
Strada le Grazie 15, 37134 Verona, Italy}
\email{virginia.agostiniani@univr.it}

\author[L.~Mazzieri]{Lorenzo Mazzieri}
\address{L.~Mazzieri, Universit\`a degli Studi di Trento,
via Sommarive 14, 38123 Povo (TN), Italy}
\email{lorenzo.mazzieri@unitn.it}


\begin{abstract} 
Using the electrostatic potential $u$ due to a uniformly charged body $\Om \subset \R^n$, $n\geq 3$, we introduce a family of 
monotone quantities associated with the level set flow of $u$. The derived monotonicity formulas are exploited to deduce a new quantitative version of the classical Willmore inequality.
\end{abstract}

\maketitle

\noindent\textsc{MSC (2010): 
35B06,
\!53C21,
\!35N25.
}

\smallskip
\noindent\keywords{\underline{Keywords}:  
electrostatic capacity,
monotonicity formulas,
quantitative Willmore inequality.} 

\date{\today}

\maketitle


\section{Introduction}
\label{sec:introd}


\subsection{Setting of the problem and statement of the main result.}

We consider the electrostatic potential due to a charged body, modelled by a bounded domain $\Omega$ with $\mathscr{C}^{2, \a}$-boundary,
for some $0<\alpha<1$.
The potential is defined as the unique solution $u$ 
of the following problem in the exterior domain
\begin{equation}
\label{eq:pb}
\left\{
\begin{array}{rcll}
\displaystyle
\De u\!\!\!\!&=&\!\!\!\!0 & {\rm in }\quad\R^n\setminus\overline\Om \,,\\
\displaystyle
  u\!\!\!\!&=&\!\!\!\!1  &{\rm on }\ \ \pa\Om \, ,\\
\displaystyle
u(x)\!\!\!\!&\to&\!\!\!\!0 & \mbox{as }\ |x|\to\infty \, .
\end{array}
\right.
\end{equation}
Throughout the paper we assume
that $\pa\Om$ is a regular level set of $u$.
It is worth pointing out that for every $0<t\leq1$ the level set $\{u=t\}$ is compact, due to the properness of $u$. 
Moreover, we have that for every $t>0$ sufficiently small the level set $\{u=t\}$ is diffeomorphic to a 
$(n-1)$-dimensional sphere, and thus connected. These properties can be deduced from expansion~\eqref{eq:u_exp} below.

A natural quantity associated with a solution to problem~\eqref{eq:pb} is the electrostatic capacity of $\Om$, which is defined as
\begin{equation*}
\label{def:cap}
{\rm Cap} (\Omega) \, 
:= 
 \,  \inf  \, \bigg\{  \frac{1}{(n-2) |\Sph^{n-1}|}\int_{ \R^n}  |\D w|^2 \rmd \mu \,\, \bigg| \,\,  w\in\mathscr C_{c}^{\infty}(\R^n),\,w\equiv1\mbox{ in }\Om 
\bigg\} \, .
\end{equation*}
We recall that the capacity of $\Om$ can be computed in 
terms of the electrostatic potential $u$ as
\begin{equation}
\label{formula_cap}
 {\rm Cap} (\Omega) \,(n-2)\, |\Sph^{n-1}| \, = \int\limits_{  \pa \Om }  |\D u| \, \rmd \sigma \,\, = \!\! \int\limits_{ \{ u = t \}} \!\!\! |\D u| \, \rmd \sigma  \, ,
\end{equation}
where in the second equality we have used the Divergence Theorem 
and the equation $ \Delta u =0$. 
This last fact can be rephrased by saying that the function $F_0:  [1, + \infty) \, \longrightarrow \, \R $, given by
\begin{equation}
\label{eq:f0}
\tau \,\, \longmapsto \,\, F_0(\tau)  
\,=  \!\!\!\!\!
  \int\limits_{ \{ u \,= \, 1/\tau \}} \!\!\!\!\!\!  
|\D u| \, \rmd \sigma \,,
\end{equation}
is constant and 
$F_0 (\tau) \equiv {\rm Cap} (\Omega)(n-2)|\Sph^{n-1}|$,
for every $t \in (0, 1]$. In analogy with~\eqref{eq:f0}, we introduce for 
$\beta\geq0$ the functions $F_\beta:  [1, + \infty) \, \longrightarrow \, \R $ given by
\begin{equation}
\label{eq:fb}
\tau \,\, \longmapsto \,\, F_\beta(\tau) \, 
\,= 
\,\,\, \tau^
{\,\beta\,(\frac{n-1}{n-2})}
\!\!\!\!\! \int\limits_{ \{ u \,= \, 1/\tau \}} \!\!\!\!  
|\D u|^{\beta+1} \, \rmd \sigma \,.
\end{equation}
To describe a first property of the functions $F_\beta$'s,
observe that, using the following classical asymptotic expansions 
at infinity (see for instance~\cite{Kellogg})
\begin{align}
u \,\, &= \,\,{\rm Cap}(\Omega)\,\, |x|^{2-n}  \, + \, o(|x|^{2-n}) \, ,
\nonumber\\
{\rm D}_{\!\mu}u 
\,\, &=\,- (n-2) \,  {\rm Cap}(\Omega)\,\, |x|^{-n}\,x_{\mu} \,+ \, o(|x|^{1-n}) \, ,
\label{eq:u_exp}\\
\D_\mu\D_\nu u 
\,\, &= \,\,(n-2) \, {\rm Cap}(\Omega)\,\, |x|^{-n - 2}
\left(n \, {x_{\mu}x_{\nu}}
- {|x|^2}\delta_{\mu\nu}\right) \, + \, o(|x|^{-n}) \, ,
\nonumber
\end{align}
one can easily compute the limit
\begin{equation}
\label{eq:lim_fb}
\lim_{\tau\to +\infty} \, F_\beta(\tau) 
\,\, = \,\,
\left[\, {{\rm Cap}(\Omega)} \, \right]^{\frac{n-2-\beta}{n-2}} 
\!(n-2)^{\beta+1}\, |\Sph^{n-1}| \, .\phantom{\qquad}
\end{equation}

Before proceeding with the statement of the main result, it is worth describing some other features of the functions $\tau \mapsto F_\beta(\tau)$.
First of all, we notice that such functions are well defined, since the integrands are globally bounded and the level sets of $u$ have finite hypersurface area. In fact, since $u$ is harmonic, the level sets of $u$ have locally finite 
$(n-1)$-dimensional Haurdorff measure $\mathscr{H}^{n-1}$
(see for instance~\cite{Hardt_Simon} and 
the references therein). Moreover, by the properness of $u$, 
such level sets are compact and thus with finite hypersurface area. To describe another important feature of the $F_\beta$'s, we recall that in the case where $\Om$ is a ball the explicit solution to problem~\eqref{eq:pb} is given by (a multiple of) the Green function. Hence, the expansions~\eqref{eq:u_exp} deprived of the reminder terms yield in this case explicit formul{\ae} for $u$, $\DDD u$ and $\D\D u$ on the whole $\R^n \setminus \overline{\Om}$. Tacking advantage of this observation, one easily realizes that the quantities
\begin{equation}
\label{charact_funct}
\R^n \setminus \overline{\Omega}\, \ni \,x \, \longmapsto \,\,   \frac{|\DDD u|}{u^{\!\frac{n-1}{n-2}}} \,(x) \qquad \hbox{and} \qquad  [1, +\infty) \, \ni \, \tau \, \longmapsto \!\!\!\!\int\limits_{\{u = 1/\tau\}}\!\!\!\!\!
  u^{\!\frac{n-1}{n-2}}
 \,\rmd\sigma 
\end{equation}
are constant on rotationally symmetric solutions. Also notice {\em en passant} that the square of the first quantity is known in the literature as the $P$-function naturally associated with problem \eqref{eq:pb},
see for instance 
\cite{Cra_Fra_Gaz,Enc_Per,Fra_Gaz_2008,Fra_Gaz_Kaw,Gar_Sar,Pay_Phi_1,Pay_Phi_2}. 
In Subsection \ref{sub:strategy} we will shade some lights on the 
geometric nature of such a function. 
We also note that, for every 
$\beta\geq 0$, the functions $\tau \mapsto F_\beta(\tau)$
can be rewritten in terms of the quantities appearing in~\eqref{charact_funct} as
\begin{equation}
\label{eq:fp2}
{F}_\beta(\tau)\,\,\,=\!\!\!
\int\limits_{\{u = 1/\tau \}}\!\!\!\!
\left(   \frac{|\DDD u|}{u^{\!\frac{n-1}{n-2}}}\right)
^{\!\!\beta
+1} \!\! u^{\frac{n-1}{n-2}}
\,\, \rmd\sigma .
\end{equation}
Therefore they are constant on rotationally symmetric solutions. In contrast with this, our main result states that the functions 
$\tau \mapsto F_\beta(\tau)$ 
are in general monotonically nondecreasing and that the monotonicity is strict unless $u$ is rotationally symmetric.
\begin{theorem}[Monotonicity-Rigidity Theorem]
\label{thm:main}
Let $u$ be a solution to problem~\eqref{eq:pb} and let 
$F_\beta : [1,+\infty) \rightarrow \R$ 
be the function defined  in~\eqref{eq:fb}. Then, the following properties hold true.
\begin{itemize}
\smallskip
\item[(i)] \underline{Continuity.} For every 
$\beta\geq0$, 
the function $F_\beta$ 
is continuous.  
\smallskip
\item[(ii)] \underline{Differentiability, Monotonicity \& Rigidity.} For every 
$\beta\geq(n-2)/(n-1)$, 
the function $F_\beta$ is continuously differentiable and the derivative admits for every $\tau \geq 1$ the integral representation
\begin{align}
\label{eq:der_fb}
\nonumber
\phantom{\qquad\,\,} { F}_\beta'(\tau)
\,\,=\,\, - \,\, \beta 
\!\!\!\!\! \int\limits_{ \{ u \,< \, 1/\tau \}} \!\!\!\!\!\!  u^{2-\beta\,(\frac{n-1}{n-2}) } \, |\DDD u|^{\beta-2} \, &\bigg\{ \, \left[\,\, 
\big|\D\D u\big|^2-{ \big(\textstyle\frac n{n-1}}\big)
\big|\D|\D u|\big|^2 \,\, \right]  \,\, + \\
\phantom{\int\limits_{ \{ u \,= \, 1/\tau \}}} & + \,\,
\big({\textstyle\beta-\frac{n-2}{n-1}}\big)
\,\, \big|\D^{T}|\D u|\big|^2 \,\,+ \\
& \nonumber
+ \,\,
\big({\textstyle\beta-\frac{n-2}{n-1}}\big) \,\,
|\D u|^2
\left[ \, 
{\rm H}- 
\big({\textstyle\frac{n-1}{n-2}}\big) |\D \log u|
 \, \right]^2 
 \,\bigg\}
\,\, \rmd\mu \, ,
\end{align}
where $\HHH (x)$ is the mean curvature of the level set of $u$ passing through $x$, computed with respect to the unit normal vector field
$\nu = - \D u / |\D u|$, and $\D^T$ denotes the tangential part of the gradient.
In particular, the derivative is always nonpositive. Furthermore, the sign of the derivative is strict for all $\tau \in [1,+\infty)$, unless the function $F_\beta$ is constant and $u$ is rotationally symmetric.

\smallskip

\item[(iii)] \underline{Convexity \& Rigidity.} For every 
$\beta\geq(n-2)/(n-1)$, 
the function $F_\beta$ is convex and the convexity is strict, unless the function is constant and $u$ is rotationally symmetric. Moreover, for every $\tau\in[1,+\infty)$ where $F_\beta$ is twice differentiable, the second derivative obeys the formula 
\begin{align}
\label{eq:der2_fb}
\nonumber
{ F_\beta''}(\tau)
\,\,=\,\,  \beta \,\,\,\tau^
{\,\beta\,(\frac{n-1}{n-2})-4}
\!\!\!\!\!\!  \int\limits_{ \{ u \,= \, 1/\tau \}} \!\!\!\!\!\!   |\DDD u|^{\beta-1} \, &\bigg\{ \, \left[\,\, 
| {\hhh} |^2 - \, { \big(\tfrac {1}{n-1}\big)} \, \HHH^2
\,\, \right]  \,\, + \\
\phantom{\int\limits_{ \{ u \,= \, 1/\tau \}}} & + \,\,
\beta 
\,\, \left|\frac{\D^{T}|\D u|}{|\D u |}\right|^2 \,\,+ \\
& \nonumber
+ \,\,
\big({\textstyle\beta-\frac{n-2}{n-1}}\big) \,
\left[ \, 
{\rm H}- 
\big({\textstyle\frac{n-1}{n-2}}\big) |\D \log u|
 \, \right]^2 
 \,\bigg\}
\,\, \rmd\sigma \, ,
\end{align}
where $\hhh$ is the second fundamental form of the level set $\{u=1/\tau\}$, computed with respect to the unit normal vector field
$\nu = - \D u / |\D u|$, 
and $\D^T$ denotes the tangential part of the gradient. In particular, the second derivative is always nonnegative, wherever defined. Furthermore, the sign is strict, unless the function $F_\beta$ is constant and $u$ is rotationally symmetric.
\end{itemize}
\end{theorem}

It is worth mentioning that monotonicity formulas 
are nowadays known to play a fundamental
role in geometric analysis
(dropping any attempt of being complete, we mention \cite{huisken1990,Hui_Ilm,Perel_1} 
and also the more recent \cite{Brendle}),
as well as in the study of geometric properties
of harmonic functions on manifolds subject to 
suitable curvature lower bounds
\cite{Colding_1,Colding_Minicozzi_2,Colding_Minicozzi}.
With regard to the last mentioned references,
we postpone further comments to the {\em Added note}
at the end of this section.

Before stating in Subsection \ref{sub:geom} the main geometric implications of the above theorem, let us list some remarks, in which some technical details concerning our monotonicity formulas are taken into account. 

\begin{remark}
\label{rem:deretta}
Let us observe that if $1/\tau$ is a regular value of $u$, then the function $F_\beta$ is differentiable at $\tau$ for every $\beta \geq 0$, and a direct computation gives 
\begin{equation}
\label{eq:der_up}
\phantom{\qquad}{F_\beta'}(\tau)
\,=\, - \, \beta \,\,\,\tau^
{\,\beta\,(\frac{n-1}{n-2})-2}
\!\!\!\!\!\! \int\limits_{ \{ u \,= \, 1/\tau \}} \!\!\!\!  |\DDD u|^\beta
\left[\, \HHH-
\big(\tfrac{n-1}{n-2}\big)
\, {|\DDD \log u|}
\, \right] 
 \, \rmd \sigma \, ,
\end{equation}
where $\HHH$ is the mean curvature of the level set $\{u=1/\tau\}$ computed with respect to the unit normal vector field
$\nu = - \D u / |\D u|$. 
In Section~\ref{consequences} we will combine the above formula~\eqref{eq:der_up} with the sign coming from the Monotonicity Formula~\eqref{eq:der_fb} for $\beta \geq (n-2)/(n-1)$, in order to draw several geometric conclusions. 
For example, such combination implies at once 
the non existence of minimal level sets of $u$,
and in turn the non existence of 
smooth minimal compact hypersurfaces in $\R^n$.
\end{remark}

\begin{remark}
\label{rem:uno_u}
Notice that for $\beta \geq 1$ formula~\eqref{eq:der_up} is well-posed also in the case where $\{u = 1/\tau\}$ is not a regular level set of $u$. 
Indeed, as already observed, it is well-known that, 
since $u$ is harmonic and proper,
the $\mathscr H^{n-1}$-measure of each of 
its level sets is finite. 
Moreover, by the results in~\cite{Nadirashvili} 
(see also~\cite{Caf_Fri} and~\cite{Che_Nab_Val}), 
the Hausdorff dimension of its critical set 
is bounded above by $(n-2)$. 
In particular, the unit normal is well defined 
$\mathscr{H}^{n-1}$-almost everywhere on each level set 
and so does the mean curvature $\HHH$. 
In turn, the integrand in~\eqref{eq:der_up} is well defined $\mathscr{H}^{n-1}$-almost everywhere. 
Finally, we observe that where $|\D u| \neq 0$ it holds
\begin{equation*}
|\D u|^{\beta}\, \HHH \,\, = \,\,\, |\D u|^{\beta-3} \,\DD u (\D u , \D u) \, .
\end{equation*}
Since $|\DD u|$ is uniformly bounded 
in $\R^n\setminus\overline\Om$, 
this shows that the integrand in~\eqref{eq:der_up} is essentially bounded and thus summable on every level set of $u$, provided $\beta \geq 1$. In the case where $(n-2)/(n-1)\leq \beta<1$, it is no longer possible to infer that the function $|\D u|^{\beta} \, \HHH$ is essentially bounded on the critical level sets of $u$. 
However, we will prove in Corollary~\ref{cor:diffi} -- which is at the core of Theorem~\ref{thm:main}-(ii) -- that the right hand side of~\eqref{eq:der_up} admits a unique continuous extension to the critical values of $u$. In this sense, the function $|\D u|^{\beta} \, \HHH$ can be understood as an integrable function also for small admissible values of $\beta$.
\end{remark}

\begin{remark}
\label{rem:seconder}
Formula~\eqref{eq:der2_fb}
for the second derivative of $F_\beta$ 
follows at once from~\eqref{eq:der_fb}, through the Coarea Formula and the pointwise identity
\begin{equation*}
\big|\D\D u\big|^2 - \, { \big(\textstyle\frac n{n-1}}\big) \, 
\big|\D|\D u|\big|^2  
 - \, \big(\tfrac{n-2}{n-1}\big)
\, \big|\D^{T}|\D u|\big|^2 \, = \,\,\,|\D u|^2 \, \left[ | {\hhh} |^2 - \, { \big(\tfrac {1}{n-1}\big)} \, \HHH^2 \right]\, .
\end{equation*}
However, at a regular value of $u$, it can also be deduced from~\eqref{eq:der_up} 
via a direct computation. 
\end{remark}


\subsection{Geometric implications}
\label{sub:geom}


In this subsection, we describe the main geometric conclusions that can be drawn from the Monotonicity-Rigidity Theorem~\ref{thm:main}, deferring the proofs to Section~\ref{consequences}.

A nowadays classical statement about the geometry of smooth closed surfaces in the Euclidean three-dimensional space is the so called Willmore Inequality, namely
\begin{equation}
\label{eq:will2}
16 \pi \,\, \leq \,    \int_{ \pa \Omega} \!\! \HHH^2  \, \rmd \sigma \, ,
\end{equation}
where the equality is achieved if and only if the domain $\Omega$ 
is a round ball. Its validity, together with its extension to higher dimensions, has been established by the joint efforts of several authors (see \cite[Theorem 3]{Chen_2}, \cite{Chen_1}, and also \cite{Willmore}, together with the references therein). Applying the Monotonictiy-Rigidity Theorem with $\beta = n-2$, one gets immediately that $F_{n-1}(1) \geq \lim_{\tau \to + \infty} F_{n-1} (\tau)$, where the limit equals
$(n-2)^{n-1} |\Sph^{n-1}|$, in view of formula~\eqref{eq:lim_fb}. On the other hand, it is not hard to use the sign of~\eqref{eq:der_up} in combination with the H\"older inequality  in order to obtain an upper bound for $F_{n-1}(1)$ in terms of the $L^{n-1}$-norm of the mean curvature of $\pa \Omega$. As a consequence, one recovers a new proof of the $(n-1)$-dimensional Willmore Inequality 
\begin{equation}
|\Sph^{n-1}| \,\, \leq \,   \int_{ \pa \Omega}  \left|\frac{\HHH}{n-1}\right|^{n-1}  \rmd \sigma \, ,
\end{equation}
together with its corresponding rigidity statement, which characterises the round balls. Using the optimal threshold for the exponent $\beta$ in our Monotonicity Formulas, we also deduce a novel sharp quantitative version of the above inequality.
\begin{theorem}
[Quantitative Willmore-type inequality]
\label{cor:will-def} 
Let $\Omega \subseteq \R^n$, 
$n\geq 3$, 
be a bounded domain with smooth boundary. Then, the inequality	
\begin{equation}
\label{eq:will}
 \Big|  {F'_{\frac{n-2}{n-1}}}(1) \Big| 
 \,\, \leq \,\,  A(n) \, \left[{\rm Cap}(\Omega) \right]^{(n-2)/(n-1)} \left[  \left( \int_{ \pa \Omega}  \left|\frac{\HHH}{n-1}\right|^{n-1} \!\!\!\! \rmd \sigma \right)^{\!\!1/(n-1)} \!\!\!\!\!\!\!\!\!\!\!\! - \,\,\,\,\, \left|\Sph^{n-1}\right|^{\,1/(n-1)} \right]
\end{equation}
holds true, where $\HHH$ is the mean curvature of $\pa \Omega$ and the positive constant $A(n)$ is explicitly given by
\begin{equation}
\label{A(n)}
A(n) \,\, = \,\, { \, (n-2)^{(2n-3)/(n-1)} \, \left|\Sph^{n-1}\right|^{(n-2)/(n-1)}} \, .
\end{equation}
Moreover, the deficit on the left hand side is optimal in the sense that if ti vanishes, then the right hand side also vanishes and $\Omega$ is a round ball.
\end{theorem}

\begin{remark}
The optimal deficit on the left hand side of~\eqref{eq:will} can be written more explicitly with the help of~\eqref{eq:der_fb}, so that the Quantitative Willmore-type Inequality rewrites as 
\begin{equation*}
 \int\limits_{\R^n \setminus \overline{\Omega}}\!\! u   \left[  \frac{ 
\big|\D\D u\big|^2-{ \big(\textstyle\frac n{n-1}}\big)
\big|\D|\D u|\big|^2 }{|\DDD u|^{n/(n-1)}}\right] \,\, \rmd \mu
 \, \leq \,\,  \hat{A}(n) 
\left[{\rm Cap}(\Omega) \right]^{\frac{n-2}{n-1}} \left[  \left( \int_{ \pa \Omega}  \left|\frac{\HHH}{n-1}\right|^{n-1} \!\!\!\! \rmd \sigma \right)^{\!\!\frac{1}{n-1}}
\!\!\!\!\! - \,\,\, \left|\Sph^{n-1}\right|^{\frac{1}{n-1}}\right]
\end{equation*}
where $u$ is the capacitary potential of $\Omega$ and the positive constant $\hat{A}(n)$ is explicitly given by
\begin{equation*}
\hat{A}(n) \, = \, {(n-1) \, \left[ \, (n-2) \, \left|\Sph^{n-1}\right| \,\,  \right]^{(n-2)/(n-1)}} \, .
\end{equation*}
\end{remark}

In~\cite{Ago_Fog_Maz} we extend the validity of the Willmore-type Inequality to the case of manifolds with nonnegative Ricci curvature and Euclidean volume growth, obtaining among the consequences some new characterizations of the Asymptotic Volume Ratio. This context is also
the more natural for drawing a detailed comparison 
with the work of Colding and Minicozzi,
as explained in the \emph{Added note}.

Another important instrument in the study of the geometry of the hypersurfaces in $\R^n$ is the so called Minkowski inequality. It says that if $\Omega$ is convex, then
\begin{equation}
\label{eq:mink}
|\pa \Omega|^{(n-2)/({n-1})} \, |\Sph^{n-1}|^{1/(n-1)} \,\, \leq \,   \int_{ \pa \Omega}  \frac{\HHH}{n-1}  \,\, \rmd \sigma \, .
\end{equation}
A new proof of this classical result is provided in~\cite{Fog_Maz_Pin}, 
using the level set flow of $p$-capacitary potentials.
Taking advantage of the beautiful results of~\cite{Ger} and~\cite{Urb}
about the long time behaviour of the Inverse Mean Curvature Flow for domains 
that are mean convex and starshaped,
it is possible to extend the validity of 
inequality~\eqref{eq:mink}
to this class of domains 
(see~\cite{Guan_Li}).
Another route, still based on the Inverse Mean Curvature Flow and 
suggested by Huisken in~\cite{Hui_video},
is extending the Minkowski inequality to the case of 
outward minimizing domains. These are also mean convex, but not necessarily diffeomorphic to spheres. 
Furthermore, in~\cite{Chang_Wang_2011,Chang_Wang_2013},
the Minkowski inequality is proved for mean convex domains 
whose boundary has positive scalar curvature,
using techniques from optimal transport. 
An open question is whether the mean convexity alone is sufficient to imply the validity of~\eqref{eq:mink} or not. 
We will address this question in
a forthcoming work. 
Here, we provide a weighted version of inequality~\eqref{eq:mink} that has the advantage of not requiring any geometric restriction on $\Omega$.
\begin{theorem}
[Weighted Minkowski Inequality]
\label{cor:mink-def} 
Let $\Omega \subseteq \R^n$, 
$n\geq 3$, 
be a bounded domain with smooth boundary. Then, the inequality
\begin{equation}
\label{eq:minki}
\Big|F'_{\frac{n-2}{n-1}}(1)\Big|  
\,\,\leq \,\,   
A(n)
\left[
\frac{{\rm Cap}(\Omega)}
{|\pa\Omega|} 
\right]^{\frac{n-2}{n-1}}
\Bigg[
\int_{ \pa \Omega}  \frac{\HHH}{n-1} \,\, \rmd \overline\sigma  
 \,\,\,- \,\,\, 
 |\pa\Omega|^{(n-2)/({n-1})} \,\left|\Sph^{n-1}\right|^{\,1/(n-1)}
 \Bigg]
\end{equation}
holds true, where $\HHH$ is the mean curvature of $\pa \Omega$, the measure element $\rmd \overline{\sigma}$ is defined as
\begin{equation*}
\rmd \overline\sigma  \,\,\,\, = \,\,\,\, \left( \frac{|\D u|}{\fint_{\pa \Omega}\!
  |\DDD u|
\,\, \rmd\sigma } \right)^{\!(n-2)/(n-1)}  \rmd \sigma \,\, ,
\end{equation*}
and the positive constant $A(n)$
is given by formula~\eqref{A(n)}.
Moreover, the deficit on the left hand side is optimal in the sense that if it vanishes, then the right hand side also vanishes and $\Omega$ is a round ball.
\end{theorem}
Concerning the weighted measure that appears in the above statement, it is intriguing to observe that by the reverse Jensen's inequality, one has that
\begin{equation*}
\int\limits_{\pa \Omega} \rmd \overline\sigma 
\,\, \leq \,\, 
\left|  \pa \Omega\right| \, ,
\end{equation*}
so that the mass of $\pa \Omega$ with respect to $\overline\sigma$ is less than or equal to the usual one.


\subsection{Strategy of the proof. } 
\label{sub:strategy}


In this subsection, we present the main ideas underlying the proof
of the Monotonicity-Rigidity Theorem.
To do this, we focus for simplicity on the case $\beta =2$. 
Our strategy consists of two main steps. 
The first step is the construction of a {\em cylindrical ansatz},
that is a metric $g$ conformally equivalent to the 
Euclidean metric $g_{\R^n}$
through the conformal factor 
$u^{\frac2{n-2}}$, namely
\begin{equation*}
g \, = \, u^{\frac2{n-2}} \,g_{\R^n} \,.
\end{equation*}
The reason for the name is that when $u$ is rotationally symmetric,
then $g$ is the cylindrical metric. 
Before proceeding, we recall that the same strategy described here
is at the basis of the results of~\cite{Ago_Maz_2}
and of~\cite{Bor_Maz_1,Bor_Maz_2}, 
where \emph{static metrics} and the associated \emph{static potentials} are considered 
in place of the Euclidean metric and 
the corresponding electrostatic potential.
The cylindrical ansatz leads to a reformulation of 
problem~\eqref{eq:pb} where the new metric $g$ and the 
$g$-harmonic function $\ffi=-\log u$ fulfill the 
quasi-Einstein type equation
\begin{equation*}
\Ricg \, - \, \nana\ffi \, + \, \frac{d\ffi\otimes d\ffi}{n-2}
\,\, = \,\,
\frac{|\na\ffi|^2_g}{n-2}\,\g \,,  \quad\qquad \hbox{in \ $M$}.
\end{equation*}
Here, $M=\R^n\setminus\Om$ and 
$\na$ is the Levi-Civita connection of $g$.
Before proceeding, it is worth pointing out that taking 
the trace of the above equation gives
\begin{equation*}
\frac{\RRR_g}{n-1} \,= \, \frac{|\na \ffi|_g^2}{n-2} \, ,
\end{equation*}
where $\RRR_g$ is the scalar curvature of the conformal metric $g$. 
Hence, $|\na \ffi|_g^2$ is expected to be constant
precisely when $(M,g)$ is isometric to a round cylinder.
Noticing that
\begin{equation}
\label{eq:P_function}
|\na\ffi|_\g 
\,\, = \,\,  \frac{|\DDD u|}{u^{\!\frac{n-1}{n-2}}}
\end{equation}
and recalling the little discussion after formula~\eqref{charact_funct}, we obtain a clear geometric interpretation of the constancy of the $P$-function 
$x\mapsto P(x)  =  \left( |{\rm D}u|^2/u^{2(n-1)/(n-2)} \right)(x)$,
which is naturally
associated with problem~\eqref{eq:pb}.

The second step of our strategy consists in proving via a splitting principle that the metric $g$ has indeed a product structure, provided the hypothesis of the 
Rigidity statement is satisfied
(splitting techniques have been successfully employed
in the context of partial differential equations for 
example in \cite{Ago_Maz_1,Farina}).
More in general, 
we use the above conformal reformulation of the original system combined with the Bochner identity to deduce the equation
\begin{equation*}
\Delta_\g|\na \ffi|_\g^{2}\, - \, \big\langle\na|\na \ffi|^{2}_\g \, \big| \,\na\ffi
\big\rangle_{\!\!\g} \, = \,  2  \,  
  \big|\nana \ffi\big|_\g^2
  \,.
\end{equation*}
Observing that the drifted Laplacian appearing on the left hand side is formally self-adjoint with respect to the weighted measure 
${\rm e}^{-\ffi}\rmd \mu_g$, 
we integrate by parts and obtain, 
for every $s\geq0$, the integral identity
\begin{equation}
\label{caso_p_2}
\int\limits_{\{\ffi=s\}} \!\!
|\na \ffi|_\g^{2}\,\,\HHH_g 
\,\,\rmd\sigma_{\!g}  
\,\,\, =\,\,\,
{\rm e}^s\!\!\!\int\limits_{\{\ffi> s\}} \!\!
\frac{   
\big|\nana \ffi\big|_\g^2
   }{{\rm e}^{\ffi}}
\,\,\rmd\mu_g \, ,
\end{equation}
where $\Hg$ is the mean curvature of the level set $\{ \ffi = s \}$ inside the ambient $(M,g)$, computed with respect to the normal vector field $\na \ffi/ |\na\ffi|_g$. To give an effective interpretation of the above identity, it is now convenient to consider, for every $\beta \geq 0$,
the function
$\Phi_\beta: [0, +\infty) \longrightarrow \R$, given by
\begin{equation*}
\Phi_\beta(s) 
\,\,=\!\!\!
\int\limits_{\{\ffi = s\}}\!\!\!
|\na \ffi|_g^{\beta+1} \,\,\rmd \sigma_{\!g} \, .
\end{equation*}
A direct computations shows that, for $\beta=2$, one has
\begin{equation}
\label{eq:der_b_2}
{\Phi_2'} (s) \, = \, - \, 2  \!\!\! \int\limits_{\{\ffi=s\}} \!\!
|\na \ffi|_\g^{2}\,\,\HHH_g 
\,\,\rmd\sigma_{\!g}  
\,\,\, =\,\,\, - \, 2 \,\,
{\rm e}^s\!\!\!\int\limits_{\{\ffi> s\}} \!\!
\frac{   
\big|\nana \ffi\big|_\g^2
   }{{\rm e}^{\ffi}}
\,\,\rmd\mu_g \,\, \leq \,\, 0\,  ,
\end{equation}
so that the function $s \mapsto \Phi_2(s)$ is monotone.
Also, under the hypothesis of the Rigidity statement, the left hand side of the above identity vanishes at some point and thus the Hessian of $\ffi$ must be zero in an open region of $M$. In turn, by analyticity, it vanishes everywhere. On the other hand, the asymptotic behavior of $u$ implies that $\ffi(x)\to + \infty$ when $x\to \infty$.
In particular, $\na \ffi$ is a nontrivial parallel vector field. Hence, it provides a natural splitting direction for the metric $g$,
which can then be proved to have a product structure.
Finally, using the fact that $g_{\R^n}$ is flat and thus $g$ is conformally flat by construction, we can argue as in \cite{Ago_Maz_1} that the cross sections of the Riemannian product $(M,g)$ are indeed metric spheres and that in turn $(M,g)$ is isometric to a round cylinder.

For an arbitrary $\beta \geq (n-2)/(n-1)$, we obtain in place of~\eqref{caso_p_2} and~\eqref{eq:der_b_2}
the more general sequel of identities
\begin{eqnarray}
\Phi_\beta' (s) & = & - \, \beta  \!\!\!\int\limits_{\{\ffi=s\}} \!\!
{   |\na \ffi|_\g^\beta\,\HHH_g }
\,\,\rmd\sigma_{\!g} \,\,\, = \nonumber \\
& = & - \, \beta \,\, {\rm e}^s \!\!\!\!
\int\limits_{\{\ffi> s\}} \!\!
\frac{  |\na \ffi|_\g^{\beta-2}  
\Big(   \,  \big|\nana \ffi\big|_\g^2
 +  \, (\beta-2) \, \big| \na |\na \ffi |_\g   \big|_\g^2   \, \Big)       }{{\rm e}^{\ffi}}
\,\,\rmd\mu_g \,
\,\,\leq\,\, 0\,.
\end{eqnarray}
As for the case $\beta=2$, the monotonicity and rigidity results are obtained thanks to the nonnegativity of the rightmost hand side of the above identity, which is ensured by the standard Kato inequality when $\beta \geq 1$ and by the refined Kato inequality for harmonic functions when 
$(n-2)/(n-1)\leq\beta<1$. The monotonicity formulas claimed in the statement of Theorem~\ref{thm:main} are finally obtained via the identities
\begin{align*}
F_\beta(\tau) 
  \,\,=\,\, 
\Phi_\beta\left( \log \tau \right) \,  
\qquad \hbox{and} \qquad
 \tau\,\, F_\beta'(\tau) 
\,\,=\,\,  
 {\Phi_\beta'\left(\log \tau \right)} \, . 
\end{align*}


\subsection{Concluding remarks and further directions. } 
\label{sub:further}


We conclude this Introduction with some comments and  suggestions about the possible implications of our monotonicity formulas. An interesting feature is that they seem to indicate that in some specific contexts the level set flow of a suitably chosen harmonic function can be employed as a valid substitute of the Inverse Mean Curvature Flow to obtain sharp geometric inequalities. The advantages in considering the first flow instead of the latter one are quite evident. In fact, all the issues concerning the long time existence of the Inverse Mean Curvature Flow and its prolongation beyond the singular times, 
are somehow instantaneously ruled out,
due to the existence theory for 
harmonic functions in exterior domains. Also concerning the short time existence the first approach reveals a better flexibility, since the mean convexity - which is necessary to start running the Inverse Mean Curvature Flow -  is definitely not needed. Finally, the nowadays well understood structure of the nonregular level sets of harmonic functions can be fruitfully employed to extend the validity of the monotonicity formulas beyond the singular times.

Further evidences of this phenomenon have already been exploited in~\cite{Ago_Maz_2}, where a new proof of the Riemannian Penrose Inequality is obtained in every dimension for asymptotically flat static metrics, as well as in the forthcoming~\cite{Ago_Fog_Maz}, where the same monotonicity-rigidity theory is carried out in the context of complete metrics with nonnegative Ricci curvature and maximal volume growth. More in general, it would be interesting in our opinion to  investigate how far the technique based on the level set flow of harmonic functions could be employed to re-discover some of the beautiful achievements of the Inverse Mean Curvature Flow theory, possibly simplifying the analysis. 

Another beautiful challenge that is somehow suggested by the \emph{conformal splitting technique} described above concerns the possibility of obtaining enhanced versions of inequality~\eqref{eq:will2}, under topological constraints for the surface. This would eventually yield a different approach towards the study of the well known Willmore Conjecture, recently solved in the affirmative by the outstanding work of Marques and Neves~\cite{Mar_Nev}. 
In this direction we just mention that the exterior boundary value problem that is at the basis of our construction should be chosen differently from~\eqref{eq:pb}. However, looking at the model situation, it is not difficult to figure out some natural candidates. We plan to explore this path in future work.  


\subsection{Plan of the paper.} 
\label{sub:plan}

{
In Section~\ref{sec:conf_reform}, we reformulate problem~\eqref{eq:pb} according to the {\em cylindrical ansatz} described in Subsection~\ref{sub:strategy}. This leads to the new problem~\eqref{eq:pb_reform}, 
as well as to the conformal version of Theorem~\ref{thm:main}, namely to Theorem~\ref{thm:main_conf} below.
It is then shown how Theorem~\ref{thm:main} can be deduced after Theorem~\ref{thm:main_conf}. The remaining part of the section is devoted to the proof of some preliminary results for the analysis of system~\eqref{eq:pb_reform}, such as the gradient estimate of Proposition~\ref{thm:sharp_bound_fi} and the subsequent upper bound for the quantities $\Phi_\beta$'s,
which is the content of 
Lemma~\ref{unif_bound}. Section~\ref{sec:integral} contains the core of our analysis, which consists in the proof of two integral identities. The First Integral identity is proven in Proposition~\ref{prop:byparts} and  subsequently used in Corollary~\ref{cor:contfi} to deduce the continuity of the functions $\Phi_\beta$'s, according to the statement of Theorem~\ref{thm:main_conf}-(i). The Second Integral Identity is proven in Lemma~\ref{lem:cyl_reg} and then used in Corollary~\ref{cor:diffi} to deduce the differentiability and the monotonicity of the $\Phi_\beta$'s,
which are stated in Theorem~\ref{thm:main_conf}-(ii). The proof of Theorem~\ref{thm:main_conf} is finally completed with the Rigidity Statement deduced in Corollary~\ref{cor:rep-rig}. Section~\ref{consequences} is devoted to the consequences of Theorem~\ref{thm:main}. These are divided into 'Consequences at the boundary' (Subsection~\ref{sub:con_bound}) and 'Global geometric consequences' (Subsection~\ref{sub:con_glob}). The first include some new geometric upper bounds for the Capacity in terms of the $L^p$-norm of the mean curvature of the boundary (see Corollary~\ref{cor:capupper}), whereas the latter 
include the geometric inequalities described in Theorem~\ref{cor:will-def} and Theorem~\ref{cor:mink-def}.} 

\bigskip

\begin{center}
{\bf Added note}
\end{center}

\smallskip

{

The present manuscript is a new version of a 
paper posted on ArXiv in June 2016. 
The main novelties and perspectives 
with respect to the old version are described 
in the following list.

\begin{itemize}

\item On top of the list are the new quantitative versions of the Willmore-type inequality~\eqref{eq:will}
and the weighted Minkowski 
inequality~\eqref{eq:minki} for general domains.
Let us stress the fact that these results
are obtained exploiting the full power of
Theorem~\ref{thm:main}, up to the threshold
value $\beta=(n-2)/(n-1)$. In the previous version of the manuscript the main geometric corollaries were deduced only using values of the parameter $\beta$ above $1$ ($p\geq 2$, according to the old notation).

\smallskip

\item The proof of the Monotonicity-Rigidity Theorem for $(n-2)/(n-1) \leq \beta <1$ contained in the old version of the manuscript was more involved and heavily relied on some fine properties of the critical set of the harmonic functions. {\em In primis} on the fact that the Hausdorff dimension of such a set is at most $(n-2)$. In this version, we present a new argument, which has main the advantage of being self-contained. As such, it does not require any {\em a priori} knowledge on the size of the set where the velocity of the level set flow blows-up. For these reasons this argument can be adapted to treat the case of the level set flow of $p$-harmonic functions. This is the content of the forthcoming~\cite{Ago_Fog_Maz_2}.

\smallskip

\item The monotonicity formulas obtained in Theorem~\ref{thm:main}
have a natural extension to harmonic functions defined
in exterior domains on manifolds with nonnegative
Ricci curvature.
This context is investigated in~\cite{Ago_Fog_Maz}, where, also,
a systematic comparison is drawn between our monotonicity
formulas and those obtained for Green's functions by Colding and Minicozzi 
in the beautiful papers~\cite{Colding_1,Colding_Minicozzi_2,Colding_Minicozzi}.
Here we just observe that in the present setting their formulas 
do not yield any new information, due to the rotational symmetry of the
Euclidean Green's function. 

\smallskip

\item The above cited papers of Colding and Minicozzi have also 
inspired the new notation $F_\beta$, with $\beta\geq(n-2)/(n-1)$, 
in place of the old $U_p$, with $p\geq2-1/(n-1)$
(already used in~\cite{Ago_Maz_3} and in~\cite{Ago_Maz_2}).
Another reason for this change of notation is that
it allows to rephrase 
the property stated in Theorem~\ref{thm:main}-(iii)
in terms of the convexity of the functions $F_\beta$'s. 

\smallskip

\item Concerning the analysis of problem~\eqref{eq:pb_reform}, 
the hypothesis of \emph{bounded geometry} 
contained in the
old version of the paper is now replaced by the
much less restrictive growth condition
~\eqref{hyp_migliore}. This is possible
since the latter assumption is sufficient to deduce 
the sharp gradient bound~\eqref{gra_est},
which implies the boundedness of
the functions $\Phi_\beta$'s (see Lemma \ref{unif_bound}) and
in turn the monotonicity, as outlined at the end of 
Section~\ref{sec:conf_reform}.

\end{itemize}


\section{A conformally equivalent setting}
\label{sec:conf_reform}


\subsection{A conformal change of metric.}
\label{sub:conf}

Proceeding as in ~\cite[Section 2]{Ago_Maz_1},
we perform a conformal change of the Euclidean metric
to obtain an equivalent formulation of problem~(\ref{eq:pb}). 
Consider a solution $u$ to problem~(\ref{eq:pb}) 
and note that $0<u<1$, 
by the maximum principle.
To set up the notation, we let $M$ be $\R^n\setminus\Om$, 
denote by $g_{\R^n}$ the flat Euclidean metric of $\R^n$,
and consider the conformally equivalent metric given by
\begin{equation}
\label{eq:tilde_g}
g \, = \, u^{\frac2{n-2}}g_{\R^n}.
\end{equation}
To reformulate our problem it is also convenient to set 
\begin{equation}
\label{def:ffi}
\ffi \, = \, -\log u, 
\end{equation}
so that the metric $g$ can be equivalently written as
$g={ e}^{-\frac{2\ffi}{n-2}}g_{\R^n}$.
In what follows we denote by 
$\langle \cdot | \cdot\cdot \rangle$ and 
$\langle \cdot | \cdot\cdot \rangle_g$ and by 
$\DDD$ and $\na$ the scalar products and the 
covariant derivatives of the metrics $g_{\R^n}$ and $g$, respectively. 
The symbols $\D\D$, $\na\na$ and $\Delta$, $\Delta_g$ 
stand for the corresponding Hessian and Laplacian operators. 
{\em In the sequel, for the sake of simplicity, we will refer to a function 
in the kernel of $\Delta_g$ as to a $g$-harmonic function}.
The same computation as in~\cite[Section 2]{Ago_Maz_1} 
show that problem~\eqref{eq:pb} is equivalent to
\begin{equation}
\label{eq:pb_reform}
\left\{
\begin{array}{rcll}
\displaystyle
\phantom{\frac12}\Delta_g \ffi \!\!\!\!&=&\!\!\!\!0 & {\rm in }\quad M,\\
\displaystyle
\Ric_g - \na\na\ffi +\frac{d\ffi\otimes d\ffi}{n-2}
\!\!\!\!&=&\displaystyle\!\!\!\!\frac{|\na \ffi|^2_g}{n-2}\,g & {\rm in }\quad M,\\
\displaystyle
 \phantom{\frac12} \ffi\!\!\!\!&=&\!\!\!\!0  &{\rm on }\ \ \pa M,\\
\displaystyle
\phantom{\frac12}\ffi(x)\!\!\!\!&\to&\!\!\!\!+\infty & \mbox{as }\ x\to\infty.
\end{array}
\right.
\end{equation}
Notice that the second equation corresponds to the 
understatement $\Ric_{g_{\R^n}} = 0$, 
which is implicit in the fact that the background metric 
in problem~\eqref{eq:pb} is the flat one.


\subsection{The extrinsic curvature of the level sets.}


In the forthcoming analysis it will be important to study the geometry of the level sets of $\ffi$, which coincide with the level sets of $u$ by definition. To this end, we denote by ${\rm Crit}(\ffi)$ the set of the critical points of $\ffi$ and we fix on $M \setminus {\rm Crit}(\ffi)$ the $g_{\R^n}
$-unit vector field $\nu=-\DDD u/|\DDD u|=\DDD \ffi/|\DDD \ffi|$ and the $g$-unit vector field $\nu_g=-\na u/|\na u|_g=\na \ffi/|\na \ffi|_g$. Consequently, the second fundamental forms of the regular level sets of $u$ or $\ffi$ with respect to the flat ambient metric and the conformally-related ambient metric $g$, are given by
\begin{align*}
\ho(X,Y)& \,= \, -\frac{\D\D u(X,Y)}{|\DDD u|} \, = \,\frac{\D\D\ffi(X,Y)}{|\DDD \ffi|},\\
\ho_g(X,Y)& \, = \, -\frac{\na\na u(X,Y)}{|\na u|_g} \, = \, \frac{\na\na\ffi(X,Y)}{|\na\ffi|_g},
\end{align*}  
respectively, where $X$ and $Y$ are vector fields tangent to the level sets. Taking the traces of the above expressions with respect to the induced metrics and using the fact that $u$ is harmonic and $\ffi$ is $\g$-harmonic, we obtain the following expressions for the mean curvatures in the two ambients
\begin{equation}
\label{eq:formula_curvature}
\Ho\, = \, \frac{\D\D u(\D u,\D u)}{|\D u|^3}\,,
\qquad\quad
\Hg \, = \, - \, \frac{\nana\ffi(\na\ffi,\na\ffi)}{|\na\ffi|_{\g}^3}\,.
\end{equation}
By a direct computation one can show that the second fundamental forms and the mean curvatures are related by the following formul\ae
\begin{align}
\label{eq:formula_h_h_g}
\ho_g(X,Y) 
&\,=\,
u^{\frac1{n-2}}
\bigg[ \, \ho (X,Y) \, - \,   \Big( \frac{1}{n-2}  \Big) \, 
\frac{| \D u | }{u}
\,  \langle X |Y \rangle  \bigg] \, , \\
\label{eq:formula_H_H_g}
{\HHH_g}
&\,=\,
u^{-\frac1{n-2}}
\bigg[\, {\Ho} - \Big( \frac{n-1}{n-2}    \Big)  
\, 
\frac{ |\D u | } u 
\, \bigg] \, ,
\end{align}
where, as before, 
$X$ and $Y$ are vector fields tangent to the level sets. For the sake of completeness, we also report the reverse formul\ae\
\begin{align*}
\ho(X,Y)
&\,=\,
e^{\frac{\ffi}{n-2}}
\bigg[ \, \ho_g (X,Y)
+ \Big(\frac{1}{n-2}\Big) \,|\na \ffi|_\g \,  \langle X |Y \rangle_g \, \bigg] \, , \\
{\Ho}
&\,=\, e^{-\frac{\ffi}{n-2}}
\bigg[\, {\Hg}   + \Big(\frac{n-1}{n-2}\Big)\,  |\na \ffi|_\g  \,\bigg] \, .
\end{align*}

Concerning the nonregular level sets of $\ffi$, 
we just observe that, 
by the same arguments as in Remark \ref{rem:uno_u},
the second fundamental form and mean curvature also make sense 
$\mathscr{H}^{n-1}$-almost everywhere on
a singular level set$\{ \ffi = s_0 \}$, 
namely on the relatively open set 
$\{ \ffi =  s_0\} \setminus {\rm Crit}(\ffi)$.


\subsection{A conformally equivalent version of the Monotonicity-Rigidity Theorem.}
\label{sub:computations}

In order to take advantage of our {\em cylindrical ansatz}, it is convenient to reformulate the statement of Theorem~\ref{thm:main} in the new conformally related setting. To do that, we introduce, for every $\beta \geq 0$,
the function
$\Phi_\beta: [0, +\infty) \longrightarrow \R$, setting
\begin{equation}
\label{eq:fip}
\Phi_\beta(s) 
\,\,:=\!\!\!
\int\limits_{\{\ffi = s\}}\!\!\!
|\na \ffi|_g^{\beta+1} \,\,\rmd \sigma_{\!g} \, ,
\end{equation}
so that if $(u, g_{\R^n})$ and $(\ffi, g)$ are related by formula\ae~\eqref{eq:tilde_g} and~\eqref{def:ffi}, then the following relationships are also in force
\begin{align*}
F_\beta(\tau) 
  \,\,=\,\, 
\Phi_\beta\left( \log \tau \right) \,  
\qquad \hbox{and} \qquad
 \tau\,\, F_\beta'(\tau) 
\,\,=\,\,  
 {\Phi_\beta'\left(\log \tau \right)} \, . 
\end{align*}
%
%

\noindent Having this in mind, we can now re-state the first two points in Theorem~\ref{thm:main} in the following way.

\begin{theorem}[Monotonicity-Rigidity Theorem -- Conformal Version]
\label{thm:main_conf}
Let $(M,g,\ffi)$ be a solution to problem~\eqref{eq:pb_reform}
such that $\pa M$ is a regular level set of $\ffi$,
and assume that  the following growth condition
\begin{equation}
\label{hyp_migliore}
|\na\ffi|_g^2(x) \,\, = \,\, {o}\, ({\rm e}^\ffi) \, 
\qquad\mbox{as}\quad x\to\infty 
\end{equation}
is satisfied.
Let $\Phi_\beta : [0,+\infty) \longrightarrow \R$ 
be the function defined  in~\eqref{eq:fip}. Then, the following properties hold true.
\begin{itemize}
\item[(i)] \underline{Continuity.} For every $\beta\geq0$, 
the function $\Phi_\beta$ is continuous and admits for every $s \geq 0$ the integral representation 
\begin{equation*}
\Phi_\beta(s)\,\,
= \,\,\,{\rm e}^s \!\!\!\!\int\limits_{\{\ffi>s\}}
\!\!
\frac{  |\na \ffi|_g^{\beta-2}  
\Big(\,|\na\ffi|_\g^4 -  \beta\,  \nana \ffi (\na\ffi, \!\na\ffi)   \Big) }{{\rm e}^\ffi}
\,\,\rmd\mu_\g\,.
\end{equation*}
%

\smallskip

\item[(ii)] \underline{Differentiability, Monotonicity \& Rigidity.} For every $\beta\geq(n-2)/(n-1)$, 
the function $\Phi_\beta$ is continuously differentiable and the derivative $\Phi_\beta'$ admits for every $s \geq 0$ the integral representation
\begin{equation}
\label{eq:mono_fip}
\Phi_\beta'(s) \,\, 
= \,\,-\beta\,\,{\rm e^s}\!\!\!
\int\limits_{\{\ffi> s\}} \!\!
\frac{  |\na \ffi|_\g^{\beta-2}  
\Big(   \,  \big|\nana \ffi\big|_\g^2
 +  \, (\beta-2) \, \big| \na |\na \ffi |_\g   \big|_\g^2   \, \Big)       }{{\rm e}^{\ffi}}
\,\,\rmd\mu_g \,
\,\leq\,0\,. 
\end{equation}
Moreover, if there exists $s_0\geq0$ such that $\Phi_\beta'(s_0) = 0$, for some $\beta_0\geq(n-2)/(n-1)$,
then the manifold $(\{ \ffi \geq s_0\} , g)$ is
isometric to 
$\big(\,[s_0,+\infty)\times\{\ffi=s_0\},
d\varrho \otimes d\varrho + \g_{|\{ \ffi = s_0 \}}\big)$,
where $\varrho$ is the distance to $\{ \ffi = s_0\}$,
and $\ffi$ is an affine function of $\varrho$.
\end{itemize}
\end{theorem}
The above statement will be proven in Section~\ref{sec:integral}.
More precisely, Theorem~\ref{thm:main_conf}-(i) will be deduced in Subsection~\ref{sub:first} (Corollaries~\ref{cor:repfi} and~\ref{cor:contfi}) as a consequence of the First Integral Identity~\eqref{eq:id_byparts_bis}, whereas Theorem~\ref{thm:main_conf}-(ii) will be proven in Subsection~\ref{sub:second} (Corollaries~\ref{cor:diffi} and~\ref{cor:rep-rig}) with the help of the Second Integral Identity~\eqref{eq:id_byparts_reg}.

In complete analogy with Remark~\ref{rem:deretta}, it is worth stating the following 
\begin{remark}
\label{rem:deretta_g}
We observe that if $s$ is a regular value of $\ffi$, then the function $\Phi_\beta$ is differentiable at $s$ for every $\beta \geq 0$, and a direct computation gives 
\begin{equation}
\label{derivata_di_Phi}
\Phi_\beta'(s) 
\,\,=\,\, 
- \, \beta\!\!\!\! \int\limits_{\{\ffi = s \}}\!\!\!\!  
{\, |\na\ffi|_\g^\beta\, \Hg} \,\rmd\sigma_\g \,,
\end{equation}
where $\HHH_g$ is the mean curvature of the level set $\{\ffi=s\}$
computed with respect to the unit normal vector field 
$\nu_g=\na \ffi/|\na \ffi|_g$. 
\end{remark}
The above observation will be employed in the discussion of the borderline case $\beta = (n-2)/(n-1)$ in the proof of the Rigidity Statement in Corollary~\ref{cor:rep-rig} below.

We conclude this subsection showing how our main Theorem~\ref{thm:main} can be deduced from Theorem~\ref{thm:main_conf}.

\begin{proof}[proof of Theorem~\ref{thm:main}, after Theorem~\ref{thm:main_conf}]
First of all, we notice that the classical asymptotic expansions~\eqref{eq:u_exp} imply through formula~\eqref{eq:P_function} that 
$|\na \ffi|_g^2 = \mathcal{O} (1)$, as $x \to \infty$ and thus the growth condition~\eqref{hyp_migliore} is fulfilled.
Now, we observe that the continuity statement in Theorem~\ref{thm:main}-(i) is a straightforward consequence of the analogous statement in Theorem~\ref{thm:main_conf}-(i), once the relationship $F_\beta(\tau) 
\,=\,
\Phi_\beta\left( \log \tau \right)$ is taken into account.

The differentiability and the monotonicity of the $F_\beta$'s for $\beta \geq (n-2)/(n-1)$ in Theorem~\ref{thm:main}-(ii), follow from the analogous statements for the $\Phi_\beta$'s in Theorem~\ref{thm:main_conf}-(ii), using the relationship $\tau\,\, F_\beta' (\tau) \,=\, \Phi_\beta' \left(\log \tau \right)$. 
In particular, the Monotonicity Formula~\eqref{eq:der_fb} is 
the reformulation of~\eqref{eq:mono_fip} in terms of $u$
and of the Euclidean metric, via the identities
\begin{align*}
|\na\na\ffi|^2
&\,=\,
u^{-\frac4{n-2}}
\left\{
\left|\frac{\D\D u}u\right|^2
+\frac{n(n-1)}{(n-2)^2}\left|\frac{\D u}u\right|^4
-\Big(\frac{2n}{n-2}\Big)\left|\frac{\D u}u\right|^3\!\!{\rm H}
\right\}\\
\big|\na|\na\ffi|\big|^2
&\,=\,
u^{-\frac4{n-2}}
\left\{
\left|\frac{\D|\D u|}u\right|^2
+\Big(\frac{n-1}{n-2}\Big)^2\left|\frac{\D u}u\right|^4
-2\,\big(\frac{n-1}{n-2}\Big)\left|\frac{\D u}u\right|^3\!\!{\rm H}
\right\},
\end{align*}
which yield, by adding and subtracting the term
$u^{-4/(n-2)}[n/(n-1)]\big|\D|\D u|/u\big|^2$,
\begin{align*}
|\na\na\ffi|^2
+
(\beta-2)
\big|\na|\na\ffi|\big|^2
\,=\,
u^{-\frac{2n}{n-2}}
&
\Bigg\{
\left|\D\D u\right|^2-\Big(\frac n{n-1}\Big)\big|\D|\D u|\big|^2\\
&+
\Big(\beta-\frac{n-2}{n-1}\Big)\big|\D^T|\D u|\big|^2\\
&+
\Big(\beta-\frac{n-2}{n-1}\Big)
|\D u|^2
\bigg[
{\rm H}-\Big(\beta-\frac{n-1}{n-2}\Big)
\left|\frac{\D u}u\right|\,
\bigg]^2
\Bigg\}.
\end{align*}
The Rigidity Statement follows from Remark~\ref{rem:analytic}. Indeed, the solution $u$ to problem~\eqref{eq:pb} is analytic and hence the metric $g$ defined through~\eqref{eq:tilde_g} is 
analytic as well.

The convexity of the functions $F_\beta$'s in the statement of Theorem~\ref{thm:main}-(iii) follows from the fact that the assignment $\tau \mapsto F_\beta'(\tau)$ is nondecreasing. In fact, for any given $1 \leq \tau_1 \leq \tau_2 < + \infty$, one has that
\begin{align*}
F_\beta'(\tau_2) \, - \,  F_\beta'(\tau_1) \,\,& = \,\, \frac{\Phi_\beta'(\log \tau_2)}{\tau_2} \, - \, \frac{\Phi_\beta'(\log \tau_1)}{\tau_1} \,\, = \\
& = \,\,\,\beta \!\!\!\!\!\! \!\!\!\!\!\!\!
\int\limits_{\{\log \tau_1<\ffi<\log\tau_2\}} \!\!\!\!\!\!\!\!\!\!\!\!
\frac{  |\na \ffi|_g^{\beta-2}  
\Big(   \,  \big|\nana \ffi\big|_g^2
 +  \, (\beta-2) \, \big| \na |\na \ffi |_g\big|_g^2   
 \, \Big) }
 {{\rm e}^\ffi}
\,\, \, \rmd\mu_\g \,,
\end{align*}
where the last equality corresponds to identity~\eqref{eq:premono} in Corollary~\ref{cor:diffi}, but can also be deduced at once from~\eqref{eq:mono_fip} in Theorem~\ref{thm:main_conf}-(ii). Finally, as already observed in Remark~\ref{rem:seconder}, the representation formula~\eqref{eq:der2_fb} for the second derivatives of the functions $F_\beta$'s is a straightforward consequence of the Coarea Formula.
\end{proof}


\subsection{Some pointwise identities and a sharp gradient estimate.}
\label{sub:computations}

This subsection is devoted to some preliminary computations and results, that will be readily applied in the forthcoming Section~\ref{sec:integral} to obtain a couple of integral identities and then the complete proof of Theorem~\ref{thm:main_conf}.

 We start by noticing that for a solution $(M, g,\ffi)$ of problem~\eqref{eq:pb_reform}, the Bochner formula reduces to the identity
\begin{equation}
\label{eq:Bochner_pot}
\Delta_\g|\na \ffi|_\g^2 \, - \, \big\langle\na|\na \ffi|^2_\g \, \big| \,\na \ffi \big\rangle_{\!\!\g} \,
= \,
2 \, |\na\na\ffi|^2_\g \, .
\end{equation} 
Now, observe that, wherever the following expressions are well defined, it holds
\begin{align*}
\na |\na \ffi|_\g^\beta 
& \,= \, 
\Big(\frac\beta2\Big)  
|\na \ffi|_\g^{\beta-2} \, \na |\na \ffi|_\g^2 \, ,\\
\Deg |\na \ffi|_\g^{\beta} 
&\, = \, 
\Big(\frac\beta2\Big)  
|\na \ffi|_\g^{\beta-2} \, {\Deg |\na \ffi|_g^2} 
+ \beta(\beta-2) \, |\na \ffi|_\g^{\beta-2} 
\, \big| \na |\na \ffi |_\g   \big|_\g^2 \,.
\end{align*}
Combining these two facts together with identity~\eqref{eq:Bochner_pot}, we arrive at
\begin{equation}
\label{eq:Bochner_pot_beta}
\Delta_\g|\na \ffi|_\g^\beta\, - \, 
\big\langle\na|\na \ffi|^\beta_\g \, \big| 
\,\na\ffi\big\rangle_{\!\!\g} 
\,\,=\, \, 
\beta \, |\na \ffi|_\g^{\beta-2}  
\Big[   \,  \big|\nana \ffi\big|_\g^2
 +  \, (\beta-2) \, 
 \big| \na |\na \ffi |_\g   \big|_\g^2   \, \Big]  \,.
\end{equation}

As we are going to see in the next section, the above relationship is at the core of our fundamental integral identities~\eqref{eq:id_byparts_bis} and~\eqref{eq:id_byparts_reg}, and thus also of the Monotonicity-Rigidity Theorem~\ref{thm:main_conf}, that from these identities is deduced. 

As a first effective application of the above computations, we are going to prove the following sharp gradient estimate 
(in the spirit of~\cite{Colding_1}), 
that readily translates into Theorem~\ref{thm:sharp_bound_u}, when $(u, g_{\R^n})$ and $(\ffi, g)$ are related by formul\ae~\eqref{eq:tilde_g} and~\eqref{def:ffi}.


\begin{proposition}
\label{thm:sharp_bound_fi}
Let $(M,g,\ffi)$ be a solution to problem~\eqref{eq:pb_reform} and assume that  the growth condition
\begin{equation*}
|\na\ffi|_g^2 \,\, = \,\, o\, ({\rm e}^\ffi) \, ,
\qquad\mbox{as}\quad x\to\infty, 
\end{equation*}
is satisfied. Then, for every $x \in M$ it holds
\begin{equation}
\label{gra_est}
\,|\na\ffi|_g^2(x)
\,\,\leq\,\,
\max_{\pa M}|\na\ffi|_g^2 \,. \qquad\qquad \qquad
\end{equation}
Moreover, the equality is achieved at 
some interior point of $M$
if and only if
$(M,\g,\ffi)$ is isometric to one half round cylinder.
\end{proposition}
\begin{proof}
Setting $K = \max_{\pa M} |\na \ffi|_g$ and 
\begin{equation*}
w \,\,=\,\,
\frac{K^2-|\na\ffi|_\g^2}{{\rm e}^\ffi} \, ,
\end{equation*}
it is immediate to check, with the help of~\eqref{eq:Bochner_pot}, that 
\begin{equation}
\label{eq_w_beta_2}
\Delta_\g w
\, + \,
\langle\na w|\na\ffi\rangle_\g
\,\,=\,\,
-\, 
2\,{\rm e}^{-\ffi} \, |\na\na\ffi|^2_\g
\,\,\leq\,\,
0 \, .
\end{equation}
Now observe that the growth condition~\eqref{hyp_migliore} ensures that $w(x)\to0$ as $x\to\infty$. On the other hand, it follows from the definition of $K$ that $w\geq 0$ on $\pa M$. The desired gradient bound~\eqref{gra_est} is now an easy consequence of the Maximum Principle. Finally, the rigidity statement follows from the Strong Maximum Principle, which in turn implies
the vanishing of $\na\na\ffi$ through equation~\eqref{eq_w_beta_2}.
\end{proof}

In the next lemma we are going to show that the gradient estimate just obtained forces the functions $\Phi_\beta$'s defined in~\eqref{eq:fip} to be bounded. We then conclude this subsection with an outline of the proof, under favourable assumptions, of the monotonicity claimed in Theorem~\ref{thm:main_conf}. This argument is inspired by the corresponding monotonicity results for the pseudoarea and pseudovolume functionals in~\cite{Colding_1}.

\begin{lemma}
\label{unif_bound}
Let $(M,g,\ffi)$ be a solution to problem~\eqref{eq:pb_reform} and assume that  the growth condition $|\na\ffi|_g^2 \, = \, o\, ({\rm e}^\ffi)$ is satisfied, as $x \to \infty$.
Then, for every $\beta\geq 0$, there exists a constant $C_\beta>0$ such that
\begin{equation*}
\Phi_\beta(s)\,\,\leq\, \,C_\beta \, , 
\end{equation*}
for every $s \geq 0$.
\end{lemma}

\begin{proof}
We have that
\begin{equation*}
\Phi_\beta(s)
\,\,=\!\!\!
\int\limits_{\{\ffi=s\}}
\!\!\! |\na\ffi|_\g^\beta
\, |\na\ffi|_\g \, \rmd\sigma_\g
\,\,\leq\,\,
\Big(\!\max_{\pa M}|\na\ffi|_\g^\beta \Big)
\!\!\!\int\limits_{\{\ffi=s\}}
\!\!\!|\na\ffi|_\g \, \rmd\sigma_\g
\,\,=\,\,\Big(\!\max_{\pa M}|\na\ffi|_\g^\beta \Big)
\!\int\limits_{\pa M}
\!|\na\ffi|_\g \, \rmd\sigma_\g \, ,
\end{equation*}
where the last equality follows from the Divergence Theorem and from the fact $\ffi$ is $g$-harmonic.
\end{proof}


\begin{proof}[Outline of the proof of the Mononotonicity in Theorem~\ref{thm:main_conf}-(ii)]
Suppose to have a solution $(M,g,\ffi)$ to problem~\eqref{eq:pb_reform} satisfying the requirement~\eqref{hyp_migliore}, so that the above lemma is in force. Assume in addition that  
the following integral identity holds true for every $\beta\geq(n-2)/(n-1)$ and every $0\leq S_0<S$
\begin{equation*}
\int\limits_{\{\ffi=S_0\}} \!\!\!\!
\frac{   |\na \ffi|_\g^\beta\,\HHH_\g }{ {\rm e}^{S_0} }
\,\,\rmd\sigma_\g  
\,  - \!\!\! \int\limits_{\{\ffi=S\}}\!\!\!\!
\frac{   |\na \ffi|_\g^\beta\,\HHH_\g }{ {\rm e}^S }
\,\,\rmd\sigma_\g
\,\, = \!\! 
\int\limits_{\{S_0<\ffi<S\}} \!\!\!\!\!
\frac{  |\na \ffi|_\g^{\beta-2}  
\Big(   \,  \big|\nana \ffi\big|_\g^2
 +  \, (\beta-2) \, \big| \na |\na \ffi |_\g\big|_\g^2   
 \, \Big) }
 {{\rm e}^\ffi}
\,\,\rmd\mu_\g  \,,
\end{equation*}
where $\HHH_g$ is the mean curvature of the level set computed with respect to the unit normal vector field $\na \ffi/|\na\ffi|_g$.
In the forthcoming Lemma~\ref{lem:cyl_reg} this identity will be completely justified whenever $S_0$ and $S$ are regular values,
and then it will be used in Corollary~\ref{cor:diffi} to prove that the function 
$\Phi_\beta$ 
is differentiable and satisfies
\begin{equation}
\label{basic_equiv}
\Phi_\beta' (S) \,  {\rm e}^{-S} \,\, 
-\,\,
\Phi_\beta' (S_0) \, {\rm e}^{-S_0}
\,= \,\,\,\beta \!\!\!\!\!\! \!
\int\limits_{\{S_0<\ffi<S\}} \!\!\!\!\!\!
\frac{  |\na \ffi|_\g^{\beta-2}  
\Big(   \,  \big|\nana \ffi\big|_\g^2
 +  \, (\beta-2) \, \big| \na |\na \ffi |_\g\big|_\g^2   
 \, \Big) }
 {{\rm e}^\ffi}
\,\, \, \rmd\mu_\g \,.
\end{equation}
In particular, since the right-hand side of the 
above identity is nonnegative in view
of the refined Kato inequality for 
harmonic functions, we have that
\begin{equation*}
\Phi_\beta'(S)  \,\, 
 \geq \,\,
\Phi_\beta'(S_0) \,\, {\rm e}^{S-S_0} \, .
\end{equation*}
Now, recall that our claim in Theorem~\ref{thm:main_conf} is that the derivative of $\Phi_\beta$ is everywhere nonpositive. Suppose then by contradiction that
$\Phi_\beta'  (S_0)>0$ for some $S_0\geq0$.
Integrating the above differential inequality gives
\begin{equation*}
\Phi_\beta(S)
\,\,\geq \,\,
\Phi_\beta(S_0) \, + \,  \Phi_\beta'(S_0) \, 
 \, \frac{{\rm e}^S\!- \,{\rm e}^{S_0}}{{\rm e}^{S_0}} \, ,
\end{equation*}
for every $0\leq S_0 <S$.
In turn, we would get $\Phi_\beta(S)\to+\infty$
as $S\to+\infty$, contradicting Lemma \ref{unif_bound}.
This provides the desired monotonicity.
\end{proof}

The crucial role played by integral identities in this very simple argument, motivates the analysis of the following sections.

{


\section{Integral identities and proof of Theorem~\ref{thm:main_conf}}
\label{sec:integral}

In this section we derive a couple of integral identities that will then be used to prove a conformally equivalent version of the Monotonicity-Rigidity Theorem~\ref{thm:main_conf}. They are essentially deduced using identity~\eqref{eq:Bochner_pot_beta}
\begin{equation*}
\Delta_\g|\na \ffi|_\g^\beta\, - \, 
\big\langle\na|\na \ffi|^\beta_\g \, \big| 
\,\na\ffi\big\rangle_{\!\!\g} 
\,\,=\, \, 
\beta \, |\na \ffi|_\g^{\beta-2}  
\Big[   \,  \big|\nana \ffi\big|_\g^2
 +  \, (\beta-2) \, 
 \big| \na |\na \ffi |_\g   \big|_\g^2   \, \Big]  \,
\end{equation*}
and applying the Divergence Theorem to the vector fields
\begin{equation*}
X  \,= \,  \frac{|\na\ffi|_g^\beta\na \ffi}{{\rm e}^{\ffi}} \qquad \hbox{and} \qquad Y  
\,=\,  
\frac{\na|\na\ffi|_g^\beta}{{\rm e}^{\ffi}}\, .
\end{equation*}
on sets of the form 
\begin{equation} 
\label{set_E}
E_s^S
\,:=\, 
\{ s < \ffi < S\}\,.
\end{equation}
Care will be needed to justify
the integration by parts when critical points
of $\ffi$ are present in $\overline E_s^S$.
To this aim, it will be convenient to
consider a suitable family of neighbourhoods 
of the set of critical points
${\rm Crit} (\ffi)$. Whenever ${\rm Crit} (\ffi)$ is nonempty, we set for $\ep>0$
\begin{equation}
\label{eq:pigiama}
U_\ep
\,:=\,
\big\{|\na\ffi|_\g^2<\ep\big\} \, \supseteq {\rm Crit} (\ffi)  \,.
\end{equation}
We recall that since $|\na\ffi|_\g^2$ is smooth,
Sard's Theorem implies that 
for almost every $\ep>0$ the boundary 
$\pa U_\ep$ is a regular level set of 
$|\na\ffi|_\g^2$.
In particular,
$\pa U_\ep$ is a smooth $(n-1)$-dimensional
hypersurface with 
\begin{equation}
\label{eq:nor_uep}
\frac{\na|\na\ffi|_\g^2}
{\big|\na|\na\ffi|_\g^2\big|_\g}
\end{equation}
as a unit normal vector field.
Since in the present situation $\ffi$ is analytic and thus $|\na\ffi|_\g^2$ is analytic as well, 
the result of \cite{Sou_Sou} 
guarantees that all the properties of $\pa U_\ep$ described above hold true
except for a discrete -- and thus locally finite -- set of values of the parameter $\ep$.


\subsection{First integral identity and proof of Theorem~\ref{thm:main_conf}-(i).}
\label{sub:first}

Having fixed the set-up, we are now ready to prove our integral identities. The first integral identity will be directly employed to prove that the assignment $s\mapsto\Phi_\beta(s)$ is continuous for every $\beta \geq 0$. 

\begin{proposition}[First Integral Identity]
\label{prop:byparts}
Let $(M,g,\ffi)$ be a solution to 
problem~\eqref{eq:pb_reform} and assume that  the growth condition $|\na\ffi|_g^2 \, = \, o\, ({\rm e}^\ffi)$ is satisfied, as $x \to \infty$. Then, for every 
$\beta\geq0$ and every $0 \leq s < S$, 
we have
\begin{equation}
\label{eq:id_byparts_bis}
  \int\limits_{\{\ffi=S\}}\!\!\!\!
\frac{   |\na \ffi|_\g^{\beta+1}}{ {\rm e}^{S} }
\,\,\rmd\sigma_\g \,  - \!\!\!
 \int\limits_{\{\ffi=s\}} \!\!\!\!
\frac{   |\na \ffi|_\g^{\beta+1}}{ {\rm e}^{s} }
\,\,\rmd\sigma_\g  
\,\,= \!\!\!\!\! \!\!
\int\limits_{\{s<\ffi<S\}} \!\!\!\!\!\!\!
\frac{  |\na \ffi|_\g^{\beta-2}  
\Big(\, \beta\,  \nana \ffi (\na\ffi, \!\na\ffi)  \, - \, |\na\ffi|_g^{4} \,\Big) }
 {{\rm e}^\ffi}
\,\,\rmd\mu_\g  \,,
\end{equation} 
\end{proposition}

\begin{proof}
To simplify the notation, we drop the subscript $\g$ throughout 
the proof. 
We consider the vector field 
\begin{equation*}
X  \,= \,  \frac{|\na\ffi|^\beta\na \ffi}{{\rm e}^{\ffi}},
\end{equation*}
and compute, wherever $|\na\ffi| > 0$,
\begin{equation*}
{\rm div}X\, = \, \frac{  |\na \ffi|^{\beta-2}  
\Big(\beta\,  \nana \ffi (\na\ffi, \!\na\ffi) \, 
- \, |\na\ffi|^{4}  \Big) }{{\rm e}^{\ffi}} \, ,
\end{equation*}
using the fact that $\ffi$ is harmonic. 
Notice that the harmonicity of $\ffi$ also implies that ${\rm div}X$ is locally bounded on $M$, for every $\beta \geq 0$. Now, for $0 \leq s < S$, we consider the set $E_s^S$ defined in~\eqref{set_E}
and observe that if the closure of $E_s^S$ does not contain any critical points of $\ffi$, then the thesis follows directly from the Divergence Theorem. On the other hand, we recall from~\cite{Sou_Sou} that there exists at most a finite number of critical values for $\ffi$ in between $s$ and $S$. To simplify the argument, let us now assume without loss of generality that there is only one critical value $\bar s$ of $\ffi$ in $(s,S)$, otherwise it is sufficient to repeat the same argument a finite number of times. Notice that the level sets $\{\ffi = s\}$ and $\{ \ffi = S\}$ might be critical as well.
For small enough values of the parameter $\ep$, let $\{U_\ep\}_\ep$ 
be the (nondecreasing) family of 
\emph{tubular neighbourhoods}
of $ {\rm Crit} (\ffi)$ defined in~\eqref{eq:pigiama}.
The Divergence Theorem applied
to the vector field $X$ on $E_s^S \setminus U_\ep$ gives 
\begin{equation*}
\int\limits_{E_s^S\setminus U_\ep}
\!\!\!
{\rm div}X\,\,\rmd\mu
\,\,\,\,= \!\!\!\!
\int\limits_{\pa(E_s^S\setminus U_\ep)}
\!\!\!\!\!\!\!
\big\langle X
\,\big|\,{\rm n}\big\rangle\,\,\rmd\sigma \, ,
\end{equation*}
where ${\rm n}$ is the outer unit normal to the boundary of $E_s^S\setminus U_{\ep}$. 
In particular, ${\rm n}$ is well defined almost everywhere on the boundary and it is given by formula~\eqref{eq:nor_uep} on $\pa U_\ep$, whereas it coincides with $ \pm \na \ffi / |\na \ffi|$ on $\pa E_s^S \setminus \pa U_\ep$.
To prove~\eqref{eq:id_byparts_bis}, we first observe that, since ${\rm div}X$ is locally bounded for every $\beta \geq 0$, the Dominated Convergence Theorem gives 
\begin{align}
\lim_{\ep\downarrow0}
\int\limits_{E_s^S\setminus U_\ep}
\!\!\!
{\rm div}X\,\,\rmd\mu
\,\,\,= \,\int\limits_{E_s^S}\,
&\frac{  |\na \ffi|^{\beta-2}  
\Big(\beta\,  \nana \ffi (\na\ffi, \!\na\ffi) \, 
- \, |\na\ffi|^{4}\Big)       }
 {{\rm e}^{\ffi}}
\,\,\rmd\mu  \, .\nonumber
\end{align}
Concerning the boundary integral, it is convenient to split it into several pieces, writing
\begin{align*}
&\int\limits_{\pa(E_s^S\setminus U_\ep)}
\!\!\!\!\!\!\!
\big\langle X
\,\big|\,{\rm n}\big\rangle\,\,\rmd\sigma  \,\, 
=\!\!\!\!\!\!\!\!\!\!\!\!&
 \int\limits_{\{\ffi=S \} \setminus U_\ep}\!\!\!\!\!\!\!
\frac{   |\na \ffi|^{\beta+1}}{ {\rm e}^{S} }
\,\,\rmd\sigma \,\,  - \!\!\!\!\!\!\!
 \int\limits_{\{\ffi=s \} \setminus U_\ep }\!\!\!\!\!\!\!
\frac{   |\na \ffi|^{\beta+1}}{ {\rm e}^{s} }
\,\,\rmd\sigma  \,+\!\!\!\!\!\!
\int\limits_
{\pa U_\ep\cap E_s^S}
\!\!\!\!\!\!
\frac{   |\na \ffi|^{\beta+1}}{ {\rm e}^{\ffi} }\, \left\langle \frac{\na \ffi}{|\na\ffi|} \,\,\Bigg|\,\,  \frac{\na|\na\ffi|^2}{\big|\na|\na\ffi|^2 \big|}\right\rangle \,\,\rmd\sigma \,.
\end{align*}
Using the Dominated Convergence Theorem it is not hard to argue that the first two terms converge to the left hand side of~\eqref{eq:id_byparts_bis}, as $\ep \to 0$. To treat the last term, we observe that
\begin{equation*}
\left|\,\, \int\limits_
{\pa U_\ep\cap E_s^S}
\!\!\!\!\!\!
\frac{   |\na \ffi|^{\beta+1}}{ {\rm e}^{\ffi} }\, \left\langle \frac{\na \ffi}{|\na\ffi|} \,\,\Bigg|\,\,  \frac{\na|\na\ffi|^2}{\big|\na|\na\ffi|^2 \big|}\right\rangle \,\,\rmd\sigma \,\, \right| \,\, \leq \,\,\, \ep^{(\beta+1)/2} \!\!\!\! \int\limits_
{\pa U_\ep\cap E_s^S}
\!\!\!\!\!\!
{\rm e}^{-\ffi}\,\,\rmd\sigma \, .
\end{equation*}
Hence, it tends to $0$, as $\ep \to 0$, providing the desired identity.
\end{proof}
\begin{remark}
It is interesting to observe that the above proof does not use the fact that the Hausdorff dimension of ${\rm Crit}(\ffi)$ is bounded above by $(n-2)$. In fact, if $\overline{s}$ is a critical value for $\ffi$, one has that
\begin{equation*}
\int\limits_{\{\ffi=\overline{s}\} \setminus U_\ep}\!\!\!\!\!\!\!
{   |\na \ffi|_\g^{\beta+1}}
\,\,\rmd\sigma_\g \,\,\,\longrightarrow \!\!\!\!\!\! \int\limits_{\{\ffi=\overline{s} \} \setminus {\rm Crit}(\ffi)}\!\!\!\!\!\!\!\!\!\!\!\!
{   |\na \ffi|_\g^{\beta+1}}
\,\,\rmd\sigma_\g  \,\,\,= \!\! \int\limits_{\{\ffi=\overline{s} \} }\!\!\!
{   |\na \ffi|_\g^{\beta+1}}
\,\,\rmd\sigma_\g \, , \qquad \hbox{as $\ep \to 0$ \, ,}
\end{equation*}
since by definition $|\na \ffi|_g = 0$ on the critical set.
\end{remark}

As an immediate consequence of the {\em first integral identity}, we obtain a representation formula for the functions $\Phi_\beta$'s, 
as well as the continuity of 
$s \mapsto \Phi_\beta(s)$, for every $\beta \geq 0$.
This is stated in the following corollaries.

\begin{corollary}[Theorem~\ref{thm:main_conf}-(i) -- Representation Formula for $\Phi_\beta$]
\label{cor:repfi}
Let $(M,g,\ffi)$ be a solution to 
problem~\eqref{eq:pb_reform} and assume that  the growth condition $|\na\ffi|_g^2 \, = \, o\, ({\rm e}^\ffi)$ is satisfied, as $x \to \infty$. Then, for every 
$\beta\geq0$ and every $s\geq0 $, 
we have
\begin{equation}
\label{eq:fip_div}
\Phi_\beta(s)\,\,
= \,\,\,{\rm e}^s \!\!\!\!\int\limits_{\{\ffi>s\}}
\!\!
\frac{  |\na \ffi|_g^{\beta-2}  
\Big(\,|\na\ffi|_\g^4 -  \beta\,  \nana \ffi (\na\ffi, \!\na\ffi)   \Big) }{{\rm e}^\ffi}
\,\,\rmd\mu_\g\,.
\end{equation}
\end{corollary}
\begin{proof}
It is sufficient to apply the integral identity~\eqref{eq:id_byparts_bis} and observe that 
$$
\lim_{S \to +\infty} {{\rm e}^{-S}}\!\!\int\limits_{\{\ffi=S\}} \!\!\! 
|\na \ffi|_g^{\beta+1} 
\,\,\rmd \sigma_g \, = \, \lim_{S \to +\infty} {{\rm e}^{-S}}\, \Phi_\beta(S) \, = \, 0 \,,
$$
in virtue of Lemma~\ref{unif_bound}.
\end{proof}
\begin{corollary}[Theorem~\ref{thm:main_conf}-(i) -- Continuity]
\label{cor:contfi}
Let $(M,g,\ffi)$ be a solution to 
problem~\eqref{eq:pb_reform} and assume that  the growth condition $|\na\ffi|_g^2 \, = \, o\, ({\rm e}^\ffi)$ is satisfied, as $x \to \infty$. Then, for every $\beta\geq0$ and every $s\geq0 $, 
we have that the function $s \mapsto \Phi_\beta(s)$ defined in~\eqref{eq:fip} is continuous. 
\end{corollary}
\begin{proof}
As already observed, the quantity 
\begin{equation*}
\frac{  |\na \ffi|^{\beta-2}  
\Big(\beta\,  \nana \ffi (\na\ffi, \!\na\ffi) \, 
- \, |\na\ffi|^{4}  \Big) }{{\rm e}^{\ffi}}
\end{equation*}
is locally bounded for every $\beta \geq 0$. In particular it is locally summable. Hence, by the absolute continuity of the integral, we have that the right hand side of~\eqref{eq:id_byparts_bis} tends to $0$ when either $s \to S$ or $S \to s$. This implies the continuity of the assignment $s \mapsto {\rm e}^{-s} \Phi_\beta(s)$. The continuity of $s \mapsto \Phi_\beta(s)$ follows at once.
\end{proof}


\subsection{Second integral identity and proof of Theorem~\ref{thm:main_conf}-(ii).}
\label{sub:second}

We are now ready to prove the second of our integral identities. This formula represents the core of our analysis as it will be used to prove the differentiability of the functions $\Phi_\beta$'s  
together with the monotonicity and the rigidity statement claimed in~Theorem~\ref{thm:main_conf}-(ii). For the sake of clearness, we present a version of the second integral identity for regular values of $\ffi$.
In fact, according to Remarks~\ref{rem:deretta}, \ref{rem:uno_u} and~\ref{rem:deretta_g}, if $\overline{s}$ is a critical value of $\ffi$, some attention must be payed to the precise definition of the term 
\begin{equation*}
\int\limits_{\{\ffi=\overline{s}\}} \!\!\!\!
   |\na \ffi|_\g^\beta\,\HHH_\g 
\,\,\rmd\sigma_\g \, ,
\end{equation*}
at least when $(n-2)/(n-1) \leq \beta <1$ and thus the integrand is not necessarily bounded {\em a priori}.
\begin{lemma}
[Second Integral Identity for Regular Values]
\label{lem:cyl_reg}
Let $(M,g,\ffi)$ be a solution to 
problem~\eqref{eq:pb_reform} and assume that  the growth condition $|\na\ffi|_g^2 \, = \, o\, ({\rm e}^\ffi)$ is satisfied, as $x \to \infty$.
Then, for every 
$\beta\geq(n-2)/(n-1)$ and every couple of regular values $0 \leq s < S$ of the function $\ffi$, 
we have 
\begin{equation}
\label{eq:id_byparts_reg}
\!\!\int\limits_{\{\ffi=s\}} \!\!\!\!
\frac{   |\na \ffi|_\g^\beta\,\HHH_\g }{ {\rm e}^{s} }
\,\,\rmd\sigma_\g  
\,  - \!\!\! \int\limits_{\{\ffi=S\}}\!\!\!\!
\frac{   |\na \ffi|_\g^\beta\,\HHH_\g }{ {\rm e}^S }
\,\,\rmd\sigma_\g 
\,\, =  \!\!\!\!\!\!
\int\limits_{\{s<\ffi <S\}} \!\!\!\!\!
\frac{  |\na \ffi|_\g^{\beta-2}  
\Big(   \,  \big|\nana \ffi\big|_\g^2
 +  \, (\beta-2) \, \big| \na |\na \ffi |_\g   \big|_\g^2   \, \Big)       }{{\rm e}^{\ffi}}
\,\,\rmd\mu_g \,
\,\geq\,0\,.
\end{equation}
\end{lemma}
Before proceeding with the proof, it is worth noticing that the sign of the right hand side of~\eqref{eq:id_byparts_reg} is guaranteed for every $\beta$ greater that the threshold value $(n-2)/(n-1)$ by the fact that $\ffi$ is harmonic, and thus the refined Kato inequality 
\begin{equation*}
\big|\nana \ffi\big|_\g^2
 -  \, \Big(\frac{n}{n-1} \Big) \, \big| \na |\na \ffi |_\g   \big|_\g^2 \, \geq \,\, 0 \, 
\end{equation*}
is in force almost everywhere on $\{ s< \ffi < S\}$.

\begin{proof} 
As in the proof of Proposition \ref{prop:byparts},
we drop the subscript $\g$ to simplify the notation. For every $\beta\geq(n-2)/(n-1)$, 
we consider the vector field
$Y$ defined as
\begin{equation}
\label{def:campo_Y}
Y  
\,=\,  
\frac{\na|\na\ffi|^\beta}{{\rm e}^{\ffi}} \, .
\end{equation}
Wherever $|\na\ffi| > 0$, we compute the divergence of $Y$ with the help of equation~\eqref{eq:Bochner_pot_beta}, obtaining
\begin{equation}
\label{eq:id_div}
{\rm div}Y
\, = \,  
\, \beta\,\,  \frac{ |\na \ffi|^{\beta-2}  
\Big(   \,  \big|\nana \ffi\big|^2
 +  \, (\beta-2) \, \big| \na |\na \ffi |   \big|^2   \, \Big) }
 {{\rm e}^{\ffi}} \,\, \geq \,\, 0 \,\, .
\end{equation} 
For $0\leq s <S$, we set $E_s^S= \{ s < \ffi < S\}$ as in~\eqref{set_E}, and observe that if the closure of $E_s^S$ does not contain any critical point of $\ffi$, then the validity of~\eqref{eq:id_byparts_reg} follows directly from the Divergence Theorem.
As in the proof of Proposition~\ref{prop:byparts},
we now suppose without loss of generality that
there is only one critical value $\overline s$ of $\ffi$ in $(s,S)$. 
To deal with the presence of critical points inside $E_s^S$, we consider for every $\ep>0$ a smooth nondecreasing cut-off function $\chi_\ep: [0, + \infty) \longrightarrow [0,1]$ such that 
\begin{eqnarray*}
\chi_\ep(t) = 0 \, , &\quad & \hbox{for $t\leq \frac{1}{2} \ep$} \\
0 < \dot\chi_\ep(t) < 2 \ep^{-1} \, , &\quad & \hbox{for $\frac{1}{2} \ep < t < \frac{3}{2} \ep$}\\
\chi_\ep(t) = 1 \, , &\quad & \hbox{for $\frac{3}{2} \ep \leq t$} \,. 
\end{eqnarray*}
Using $\chi_\ep$, we define the smooth function $\Xi_\ep: M \longrightarrow [0,1]$ as
\begin{equation*}
\Xi_\ep (x) \, = \, \big( \, \chi_\ep \circ |\na \ffi|^2 \big) (x) \, . 
\end{equation*}
In particular, with the notation introduced in~\eqref{eq:pigiama}, we have that ${\rm supp} (\na \Xi_\ep) \subset \overline{U_{3\ep/2}} \setminus U_{\ep/2}$, since by the chain rule it holds
\begin{equation*}
\na \Xi_\ep (x) \, = \, \big( \, \dot\chi_\ep \circ |\na \ffi|^2 \big) (x)  \,\, \na |\na \ffi|^2 (x)\, .
\end{equation*} 
Taking advantage of our cut-off functions, we now apply the classical Divergence Theorem to the smooth vector field $\Xi_\ep \, Y$, obtaining 
\begin{align*}
&\int\limits_{\{\ffi=s\}} \!\!\!\!
\frac{   |\na \ffi|^\beta\,\HHH }{ {\rm e}^{s} }
\,\,\rmd\sigma  
\,  - \!\!\! \int\limits_{\{\ffi=S\}}\!\!\!\!
\frac{   |\na \ffi|^\beta\,\HHH }{ {\rm e}^S }
\,\,\rmd\sigma
\,\,\, =  \,\,
\int\limits_{E_s^S}\frac{\Xi_\ep \, {\rm div} Y}{\beta} \, \rmd\mu \,\, + \int\limits_{E_s^S}\frac{ \langle \na\Xi_\ep \, | \,   Y  \rangle}{\beta} \, \rmd\mu  \,\, = \\
&= \int\limits_{E_s^S} 
\Xi_\ep \, \frac{  |\na \ffi|^{\beta-2}  
\Big(   \,  \big|\nana \ffi\big|^2
 +  \, (\beta-2) \, \big| \na |\na \ffi |   \big|^2   \, \Big)       }{{\rm e}^{\ffi}}
\,\,\rmd\mu \,\, + \!\!\!\!\!\int\limits_{\overline{U_{\frac{3\ep}2}} \, \setminus \, U_{\frac{\ep}{2}}} \!\!\!\!\!\! 
\dot\chi_\ep (|\na \ffi|^2) \, \frac{  |\na \ffi|^{\beta-2}  
  \, \big| \na |\na \ffi |^2  \big|^2 }{2 {\rm e}^{\ffi}}
\,\,\rmd\mu \, .
\end{align*}
Now, it is clear that when $\ep \to 0$ the first term in the last row tends to the right hand side of~\eqref{lem:cyl_reg}, by the Monotone Convergence Theorem. We claim that for every $\beta>(n-2)/(n-1)$ the last term in the second row tends to $0$, as $\ep \to 0$, or equivalently 
\begin{equation}
\label{eq:claim}
\lim_{\ep \to 0} \,\,  \int\limits_{E_s^S}\frac{ \langle \na\Xi_\ep \, | \,   Y  \rangle}{\beta} \, \rmd\mu  \,\, = \,\, 0 \, .
\end{equation}
To prove such a claim, we use the Coarea Formula to write
\begin{align*}
\int\limits_{\overline{U_{\frac{3\ep}2}} \, \setminus \, U_{\frac{\ep}{2}}} \!\!\!\!\!\! 
\dot\chi_\ep (|\na \ffi|^2) \, \frac{  |\na \ffi|^{\beta-2}  
  \, \big| \na |\na \ffi |^2  \big|^2          }{2 {\rm e}^{\ffi}}
\,\,\rmd\mu \,\, = & \,\,\frac12 \int\limits_{\ep/2} ^{3\ep/2} \! \dot\chi_\ep(s) \,\,  s^{(\beta -2)/2} \,\, \rmd s \!\!\!\!\!\!\! \int\limits_{\{|\na \ffi|^2 = s\}} \!\!\!\!\!\! \frac{\big| \na |\na \ffi |^2   \big| }{ {\rm e}^\ffi} \,\,\rmd \sigma  \, \leq \\
\leq & \,\, \frac1\ep \int\limits_{\ep/2} ^{3\ep/2} \!   s^{(\beta -2)/2} \,\, \rmd s \!\!\!\!\!\!\! \int\limits_{\{|\na \ffi|^2 = s\}} \!\!\!\!\!\! \frac{\big| \na |\na \ffi |^2   \big| }{ {\rm e}^\ffi} \,\,\rmd \sigma \, ,
\end{align*}
where the last estimate follows by the structural properties of $\chi_\ep$. Since the integrand in the last term is a continuous function of $s$, the Mean Value Theorem for integrals insures the existence of a real number $r \in (\ep/2, 3\ep/2)$ such that 
\begin{equation*}
\frac1\ep \int\limits_{\ep/2} ^{3\ep/2} \!   s^{(\beta -2)/2} \,\, \rmd s \!\!\!\!\!\!\! \int\limits_{\{|\na \ffi|^2 = s\}} \!\!\!\!\!\! \frac{\big| \na |\na \ffi |^2   \big| }{ {\rm e}^\ffi} \,\,\rmd \sigma  \,\,\, = \,\,\, r^{(\beta -2)/2} \!\!\!\!\int\limits_{\{|\na \ffi|^2 = r\}} \!\!\!\!\!\! \frac{\big| \na |\na \ffi |^2  \big| }{ {\rm e}^\ffi} \,\,\rmd \sigma \, .
\end{equation*}
If we set 
\begin{equation*}
F(r) \,\,\, = \!\!\! \int\limits_{\{|\na \ffi|^2 = r\}} \!\!\!\!\!\! \frac{\big| \na |\na \ffi |^2   \big| }{ {\rm e}^\ffi} \,\,\rmd \sigma \,, 
\end{equation*}
it is sufficient to prove that $ \lim_{r \to 0}r^{(\beta-2)/2} F(r) = 0$ in order to obtain claim~\eqref{eq:claim} for every $\beta >(n-2)/(n-1)$. To accomplish this program, we first observe that, if ${\rm n}$ is the outer unit normal vector field to the set $U_r$ defined in~\eqref{eq:pigiama}, one can write
\begin{align*}
F(r) \,\,& = \, \int\limits_{\pa U_r} \frac{ \big\langle \na |\na \ffi|^2 \, \big| \,{\rm n}\big\rangle}{{\rm e}^\ffi} \, \rmd \sigma \,\, = \, \int\limits_{U_r}  {\rm div} \!\left(\frac{ \na |\na \ffi|^2 }{{\rm e}^\ffi} \right)  \rmd \mu \,\, = \,\, 2 \int\limits_{U_r} \, \frac{ |\nana \ffi|^2 }{{\rm e}^\ffi}  \, \rmd \mu \,\, = \\  &= \,\, 2 \int\limits_{0}^r \rmd s \!\!\!\!\!\int\limits_{\{ |\na \ffi|^2 = s \}} \!\!\!\!\!\!\frac{ |\nana \ffi|^2 }{{\rm e}^\ffi  \,  \big| \na |\na \ffi |^2   \big|    } \,\, \rmd \sigma \, ,
\end{align*}
where in the last two identities we have used formula~\eqref{eq:Bochner_pot} and the Coarea Formula, respectively. Differentiating the above expression with the help of the Fundamental Theorem of Calculus, and using the refined Kato inequality, we get the differential inequality
\begin{align*}
F'(r) \,\, &= \,\,\,  2 \!\!\!\!\!\! \int\limits_{\{ |\na \ffi|^2 = r \}} \!\!\!\!\!\!\frac{ |\nana \ffi|^2 }{{\rm e}^\ffi  \,  \big| \na |\na \ffi |^2   \big|    } \,\, \rmd \sigma  \,\, \geq \,\, 2 \,  \Big( \frac{n}{n-1}\Big)\!\!\!\!\! \int\limits_{\{ |\na \ffi|^2 = r \}} \!\!\!\!\!\!\! \frac{ \big|\na |\na \ffi|\big|^2 }{{\rm e}^\ffi  \,  \big| \na |\na \ffi |^2   \big|    } \,\, \rmd \sigma \,\, = \,\, \frac12 \, \,  \Big( \frac{n}{n-1}\Big) \, \frac{F(r)}{r} \, .
\end{align*}
Integrating this inequality between $r$ and fixed value $R>r$, 
one gets
\begin{equation*}
\frac{F(r)}{r^{\frac12  ( \frac{n}{n-1}) }} \,\, \leq \,\, \frac{F(R)}{R^{\frac12  ( \frac{n}{n-1}) }} \, .
\end{equation*}
In particular, it follows that for every $\beta > (n-2)/(n-1)$
\begin{equation*}
0 \,\, \leq \,\, r^{(\beta-2)/2} F(r) \,\, \leq \,\, \frac{F(R)}{R^{\frac12  ( \frac{n}{n-1}) }} \,\,  r^{\frac12  ( \beta - \frac{n-2}{n-1})} \, \longrightarrow \, 0 \,,  \qquad \hbox{as $r \to 0$} \, .
\end{equation*}
This completes the proof of claim~\eqref{eq:claim}, and in turns of the identity~\eqref{eq:id_byparts_reg} for every $\beta$ which is strictly above the optimal threshold. To obtain the desired identity also for $\beta = (n-2)/(n-1)$, it is now sufficient to pass to the limit in $\beta$, using the Dominated Convergence Theorem on the left hand side and the Monotone Convergence Theorem on the right hand side.
\end{proof}
As a direct application of the Second Integral Identity, we are going to show in the next Corollary that for every $\beta \geq (n-2)/(n-1)$ the function $\Phi_\beta$ is differentiable.

\begin{corollary}[Theorem~\ref{thm:main_conf}-(ii) -- Differentiability \& Monotonicity]
\label{cor:diffi}
Let $(M,g,\ffi)$ be a solution to 
problem~\eqref{eq:pb_reform} and assume that  the growth condition $|\na\ffi|_g^2 \, = \, o\, ({\rm e}^\ffi)$ is satisfied, as $x \to \infty$. Then, for every $\beta\geq (n-2)/(n-1)$ and every $s\geq0 $, 
we have that the function $s \mapsto \Phi_\beta(s)$ defined in~\eqref{eq:fip} is continuously differentiable. Moreover, the derivative $\Phi_\beta'$ is everywhere nonpositive and satisfies
\begin{equation}
\label{eq:premono}
\Phi_\beta' (S) \,  {\rm e}^{-S} \, 
-\,\,
\Phi_\beta' (s) \, {\rm e}^{-s}
\,= \,\,\,\beta \!\!\!\!\!\! \!
\int\limits_{\{s<\ffi<S\}} \!\!\!\!\!\!
\frac{  |\na \ffi|_g^{\beta-2}  
\Big(   \,  \big|\nana \ffi\big|_g^2
 +  \, (\beta-2) \, \big| \na |\na \ffi |_g\big|_g^2   
 \, \Big) }
 {{\rm e}^\ffi}
\,\, \, \rmd\mu_\g \,,
\end{equation}
for every $0 \leq s < S$.
\end{corollary}
\begin{proof}
We start showing that for every $\beta\geq(n-2)/(n-1)$ the function
\begin{equation*}
[0, + \infty) \setminus \ffi ({\rm Crit}(\ffi)) \, \ni \, s \, \longmapsto \,\Psi_\beta (s)\, =  \!\!\! \int\limits_{\{\ffi=s\}} \!\!\!\!
\frac{   |\na \ffi|^\beta\,\HHH }{ {\rm e}^{s} }
\,\,\rmd\sigma 
\end{equation*}
admits a (unique) continuous extension to the whole range of $\ffi$. Such an extension will be necessarily monotone, as it will satisfy the identity 
\begin{equation*}
\Psi_\beta(S_0) \, - \, \Psi_\beta(S)
\,\, =  \!\!\!\!\!\!
\int\limits_{\{S_0<\ffi <S\}} \!\!\!\!\!
\frac{  |\na \ffi|^{\beta-2}  
\Big(   \,  \big|\nana \ffi\big|^2
 +  \, (\beta-2) \, \big| \na |\na \ffi |   \big|^2   \, \Big)       }{{\rm e}^{\ffi}}
\,\,\rmd\mu \,
\,\geq\,0\,,
\end{equation*}
for every $0 \leq S_0 < S$, due to Lemma~\ref{lem:cyl_reg}. What we need to prove is that if $\overline{s}$ is a critical value of $\ffi$, then the formula
\begin{equation*}
\Psi_\beta(\overline{s}) \,\, = \,\, \lim_{s \to \overline{s}} \Psi_\beta({s})
\end{equation*}
yields a good definition for $\Psi_\beta(\overline{s})$. In other words, we have to show that the above limit exists and is finite. Since the singular values of $\ffi$ are discrete (see~\cite{Sou_Sou}), we let $\eta>0$ be such that the only regular value in $[\overline{s} - \eta, \overline{s}+ \eta]$ is given by $\overline{s}$. Using identity~\eqref{eq:id_byparts_reg}, it is easy to see that the assignment
\begin{align*}
[\overline{s} - \eta, \overline{s}) \, \ni \, s \, \, \longmapsto \,\, \Psi_\beta(s) \,\,= &\,\,\,\, \Psi_\beta(\overline{s} - \eta) \,\,\, - \!\!\!\!\!\!\int\limits_{\{\overline{s} - \eta <\ffi < s\}} \!\!\!\!\!\!\!
\frac{  |\na \ffi|^{\beta-2}  
\Big(   \,  \big|\nana \ffi\big|^2
 +  \, (\beta-2) \, \big| \na |\na \ffi |   \big|^2   \, \Big)       }{{\rm e}^{\ffi}}
\,\,\rmd\mu 
\end{align*}
is nonincreasing for every $\beta \geq (n-2)/(n-1)$. Moreover, by the same identity, it is immediate to deduce that it is bounded from below. In fact, one has that
\begin{equation*}
\Psi_\beta (s) \,\, \geq  \,\,\, \Psi_\beta(\overline{s} - \eta) \,\,\, - \!\!\!\!\!\!\!\!\!\!\int\limits_{\{\overline{s} - \eta <\ffi < \overline{s} + \eta\}} \!\!\!\!\!\!\!\!\!\!
\frac{  |\na \ffi|^{\beta-2}  
\Big(   \,  \big|\nana \ffi\big|^2
 +  \, (\beta-2) \, \big| \na |\na \ffi |   \big|^2   \, \Big)       }{{\rm e}^{\ffi}}
\,\,\rmd\mu \,\,
=  \,\,\, \Psi_\beta(\overline{s} + \eta) \,. 
\end{equation*}
This proves that $\lim_{s \to \overline{s}^-} \Psi_\beta(s)$ exists and is finite. Reasoning in the same way, one can prove that also $\lim_{s \to \overline{s}^+} \Psi_\beta(s)$ exists and is finite. Hence, it remains to show that the two limits coincide, but this follows directly from the absolute continuity of the integral.
In fact, an immediate consequence of Lemma~\ref{lem:cyl_reg} is that 
\begin{equation*}
\frac{  |\na \ffi|^{\beta-2}  
\Big(   \,  \big|\nana \ffi\big|^2
 +  \, (\beta-2) \, \big| \na |\na \ffi |   \big|^2   \, \Big)       }{{\rm e}^{\ffi}} \,\, \in \,\, L^1 \, \big(  \{ \, \overline{s} - \eta <\ffi < \overline{s} + \eta \, \}  , \mu \big) \, .
\end{equation*}
Hence, for every $\ep>0$ there exists a $\delta>0$ such that if $E$ is a measurable set with $\mu (E) \leq \delta$, then
\begin{equation*}
\int\limits_{E} \,\,  \frac{  |\na \ffi|^{\beta-2}  
\Big(   \,  \big|\nana \ffi\big|^2
 +  \, (\beta-2) \, \big| \na |\na \ffi |   \big|^2   \, \Big)       }{{\rm e}^{\ffi}}  \,\, \rmd \mu \,\, \leq \,\, \ep \, .\end{equation*}
To conclude it is thus enough to observe that for sufficiently small $t>0$, one has that $\mu \big( \{  \overline{s} - t < \ffi < \overline{s} +t      \} \big) < \delta$. 

We now pass to discuss, for every $\beta \geq (n-2)/(n-1)$, the differentiability of the auxiliary function $\Upsilon_\beta : [0, + \infty) \longrightarrow \R$, given by $\Upsilon_\beta (s) \, = \, {\rm e}^{-s} \Phi_\beta(s)$. By the First Integral Identity~\eqref{eq:id_byparts_bis} we have that
\begin{align*}
\frac{\Upsilon_\beta(s+h) - \Upsilon_\beta(s)}{h}
\,\, &= \,\, \frac{1}{h}\!\!\!\!\! \!\!\!
\int\limits_{\{s<\ffi<s+h\}} \!\!\!\!\!\!\!
\frac{  |\na \ffi|^{\beta-2}  
\Big(\, \beta\,  \nana \ffi (\na\ffi, \!\na\ffi)  \, - \, |\na\ffi|^{4} \,\Big) }
 {{\rm e}^\ffi}
\,\,\rmd\mu  \\
& = \,\, - \, \frac{1}{h} \,\int\limits_{s}^{s+h} \!\rmd t \!\!\!\int\limits_{\{ \ffi = t\}} \!\!\frac{|\na \ffi|^{\beta +1}}{{\rm e}^\ffi} \,\, \rmd \sigma \,\,  - \,\, \frac{\beta}{h}  \int\limits_{s}^{s+h} \!\rmd t \!\!\!\int\limits_{\{ \ffi = t\}} \!\!\frac{|\na \ffi|^{\beta}\, \HHH}{{\rm e}^\ffi} \,\, \rmd \sigma \\
& = \,\, - \, \frac{1}{h} \! \int\limits_{s}^{s+h} \!   \Upsilon_\beta(t) \,\,   \rmd t  \,\,  - \,\, \frac{\beta}{h} \! \int\limits_{s}^{s+h} \!  \Psi_\beta(t) \,\,\rmd t \, ,
\end{align*}  
Using the continuity of both $\Upsilon_\beta$ and $\Psi_\beta$ and invoking the Mean Value Theorem for integrals, we arrive at 
\begin{equation*}
\frac{\Upsilon_\beta(s+h) - \Upsilon_\beta(s)}{h}
\,\, = \,\, - \, \Upsilon_\beta (\xi_h)\, - \,\beta \, \Psi_\beta(\xi_h) \, ,
\end{equation*}
for some $\xi_h$ lying between $s$ and $s+h$. Letting $h \to 0$, we finally deduce that $s \mapsto \Upsilon_\beta(s)$ is a $\mathscr{C}^1$ function and that 
\begin{equation*}
\Upsilon_\beta' \,\, = \,\,  - \, \Upsilon_\beta \, - \, \beta \, \Psi_\beta\, .
\end{equation*}
This implies in turns that for every $\beta \geq (n-2)/(n-1)$ the function $s \mapsto \Phi_\beta(s) = {\rm e}^s \Upsilon_\beta(s)$ is differentiable. Moreover, the derivative coincides with $- \beta \,{\rm e}^s  \Psi_\beta$, so that it is continuous and satisfies the identity
\begin{equation*}
\label{basic_equiv}
\Phi_\beta' (S) \,  {\rm e}^{-S} \, 
-\,\,
\Phi_\beta' (S_0) \, {\rm e}^{-S_0}
\,= \,\,\,\beta \!\!\!\!\!\! \!
\int\limits_{\{S_0<\ffi<S\}} \!\!\!\!\!\!
\frac{  |\na \ffi|^{\beta-2}  
\Big(   \,  \big|\nana \ffi\big|^2
 +  \, (\beta-2) \, \big| \na |\na \ffi |\big|^2   
 \, \Big) }
 {{\rm e}^\ffi}
\,\, \, \rmd\mu \,.
\end{equation*}
for every $0 \leq S_0 < S$. Combining the latter identity with the upper bound for $\Phi_\beta$ obtained in Lemma~\ref{unif_bound} one easily deduces the monotonicity of $\Phi_\beta$ by the same argument outlined at the very end of Section~\ref{sec:conf_reform}.
\end{proof}

As a consequence of formula~\eqref{eq:premono} in the above corollary, we obtain a representation  formula for the derivatives of the functions $\Phi_\beta$'s, together with the rigidity statement claimed in Theorem~\ref{thm:main_conf}-(ii).

\begin{corollary}[Theorem~\ref{thm:main_conf}-(ii) -- Representation Formula for $\Phi_\beta'$ \& Rigidity Statement]
\label{cor:rep-rig}
Let $(M,g,\ffi)$ be a solution to 
problem~\eqref{eq:pb_reform} and assume that  the growth condition $|\na\ffi|_g^2 \, = \, o\, ({\rm e}^\ffi)$ is satisfied, as $x \to \infty$. Then, for every 
$\beta\geq(n-2)/(n-1)$ and every $s\geq0 $, 
we have
\begin{equation}
\label{eq:fip_div}
\Phi_\beta'(s)\,
\,= \,\, - \, \beta \,\, {\rm e}^s\!\!\!\!
\int\limits_{\{\ffi>s\}} \!\!
\frac{  |\na \ffi|_g^{\beta-2}  
\Big(   \,  \big|\nana \ffi\big|_g^2
 +  \, (\beta-2) \, \big| \na |\na \ffi |_g\big|_g^2   
 \, \Big) }
 {{\rm e}^\ffi}
\,\, \, \rmd\mu_g \,.
\end{equation}
Moreover, if there exists $s_0\geq0$ such that $\Phi_\beta'(s_0) = 0$ for some $\beta_0\geq(n-2)/(n-1)$,
then the manifold $(\{ \ffi \geq s_0\} , g)$ is
isometric to 
$\big(\,[s_0,+\infty)\times\{\ffi=s_0\},
d\varrho \otimes d\varrho + \g_{|\{ \ffi = s_0 \}}\big)$,
where $\varrho$ is the distance to $\{ \ffi = s_0\}$,
and $\ffi$ is an affine function of $\varrho$.
\end{corollary}
\begin{proof}
In virtue of identity~\eqref{eq:premono}, to prove that the representation formula~\eqref{eq:fip_div} is also satisfied, it is sufficient to find a sequence of positive real numbers $(s_n)_{n\in \mathbb{N}}$ tending to $+ \infty$ such that $\Phi_\beta'(s_n) \to 0$, as $n \to +\infty$. As a consequence of Corollary~\ref{cor:diffi} we have that, for every $\beta$ in the admissible range, the function $s \mapsto \Phi_\beta(s)$ is nonincreasing. On the other such function is also bounded from below and so admits a limit as $s \to + \infty$. In particular, for every $n \in \mathbb{N}$, there exists $k = k(n) \in \mathbb{N}$ such that $0 \leq \Phi_\beta (k) - \Phi_\beta (k+1) \leq 1/n$. By Lagrange's Theorem, there exists $s_n \in [k, k+1]$, such that $|\Phi_\beta'(s_n)| \leq 1/n$.

To prove the rigidity statement, observe
that if $\Phi_\beta'(s_0) = 0$ for some $s_0\geq0$ and some
$\beta\geq(n-2)/(n-1)$, 
then
\begin{equation*}  
|\nana \ffi|^2
 +  \, (\beta-2) \, | \na |\na \ffi |  |^2 \, \equiv \, 0 \, ,
\end{equation*}
in $\{ \ffi \geq s_0\}$,
due to the refined Kato inequality for harmonic functions.
Let us now distinguish two cases, depending if 
either $\beta>(n-2)/(n-1)$ or else $\beta=(n-2)/(n-1)$. In the former case it is immediate to conclude that 
$|\na|\na\ffi||\equiv0$ in $\{ \ffi \geq s_0\}$.
In turn, $|\na\ffi|$ is a nonzero constant in that region,
due to the last two conditions in \eqref{eq:pb_reform}.
Therefore, $\na \ffi$ is a nontrivial parallel vector field and 
by~\cite[Theorem 4.1-(i)]{Ago_Maz_1} we deduce that the Riemannian manifold $(\{ \ffi \geq s_0 \}, \g)$ is isometric to 
the manifold $\{ \ffi = s_0 \} \times [s_0, + \infty)$ 
endowed with the product metric 
$d\varrho \otimes d\varrho + \g_{|\{ \ffi = s_0 \}}$,
where $\varrho$ represents the distance to $\{ \ffi = s_0\}$.
Moreover, from the proof of Theorem 4.1-(i) in the mentioned paper, 
one gets that the function $\ffi$ can be expressed as an affine function of $\varrho$ in $\{ \ffi \geq s_0 \}$, 
i.e.~$\ffi = s_0 + \varrho \, |\na\ffi|_\g $, 
where $|\na \ffi|_\g$ is a positive constant. Let us now consider the case where $\beta = (n-2)/(n-1)$, 
which gives  
\begin{equation}
\label{eq:ref_kato}
|\nana \ffi|^2 \, = \, \frac{n}{n-1} \, | \na |\na \ffi ||^2 \, ,
\end{equation}
in $\{ \ffi \geq s_0\}$. 
Following the proof of~\cite[Proposition 5.1]{Bou_Car}, 
it is possible to deduce that $|\na \ffi|$ 
is a (necessarily nonzero) constant along the level sets of $\ffi$ and thus that the metric $g$ has a warped product structure in this region, namely 
\begin{equation}
\label{eq:warped}
g \,\, = \,\, d\varrho \otimes d\varrho + \eta^2(\varrho) \,\, \g_{|\{ \ffi = s_0 \}} \, ,
\end{equation}
for some positive warping function $\eta = \eta (\varrho)$. Moreover $\ffi$, $\rho$ and the warping factor $\eta$ satisfy the relationship
\begin{equation*}
\ffi (p) \, = \, s_0 \, + \, \kappa \!\!\int\limits_{0}^{\varrho(p)} \!\!\! \frac{{\rm d} \tau}{\eta(\tau)^{n-1}} \, 
\end{equation*}
for every point $p \in \{ \ffi \geq s_0 \}$ and some $\kappa \geq 0$. In particular, $\ffi$ and $\varrho$ share the same level sets and, by formula~\eqref{eq:warped}, these are totally umbilic. In fact one has
\begin{equation*}
\chg_{ij} \, = \, \frac12 \frac{\pa \g_{ij}}{ \pa \varrho} \, = \, \frac{d \log \eta}{d \varrho} \, g_{ij} \, .
\end{equation*}
As a consequence, the mean curvature is constant along each level set of $\ffi$.
%
%
%
%
%
%
Applying formula~\eqref{derivata_di_Phi} in~\ref{rem:deretta_g}, 
to every level set $\{\ffi = s \}$ with $s \geq s_0$ and $\beta=(n-2)/(n-1)$, one gets  
\begin{equation*}
\HHH \! \! \int \limits_{\{ \ffi = s \}} \!\!\! |\na \ffi|^{\frac{n-2}{n-1}} \, \rmd \sigma  \, = \, 0 \,,
\end{equation*}
since in virtue of~\eqref{eq:ref_kato} the right hand side of~\eqref{eq:fip_div} is always zero. This implies in turn that all the level sets $\{\ffi = s \}$ with $s\geq s_0$ are minimal and thus totally geodesic. From $\HHH \equiv 0$ one can also deduce that $\langle \na |\na\ffi|^2 \, \big| \, \na \ffi\rangle \equiv 0$ in $\{ \ffi \geq s_0\}$. Hence, $|\na \ffi|$ is constant in $\{ \ffi \geq s_0 \}$, and thus the conclusion follows arguing as in the case where $\beta>(n-2)/(n-1)$.
\end{proof}

\begin{remark}
\label{rem:analytic}
Observe that
if a solution $(M,g,\ffi)$ to problem~\eqref{eq:pb_reform}
is analytic
- as in the case when $(M,g,\ffi)$ comes from
a solution $u$ to problem~\eqref{eq:pb} 
through~\eqref{eq:tilde_g}
and~\eqref{def:ffi} - 
the conclusion of the rigidity statement in 
the above Corollary~\ref{cor:rep-rig} (as well as in Theorem~\ref{thm:main_conf}-(ii)) are stronger.
More precisely, 
the isometry between $(\{ \ffi \geq s_0\} , g)$ 
and 
$\big(\,[s_0,+\infty){\times}\{\ffi=s_0\},
d\varrho \otimes d\varrho + \g_{|\{ \ffi = s_0 \}}\big)$
improves to an isometry
between the whole manifold $(M,\g)$ and
$\big(\,[0,+\infty){\times}\{\ffi=0\},
d\varrho \otimes d\varrho + \g_{|\{ \ffi = 0 \}}\big)$.
\end{remark}


\section{Consequences of the Monotonicity-Rigidity Theorem}
\label{consequences}


In this section we discuss some of the analytic and geometric consequences 
of our Monotonicity-Rigidity Theorem~\ref{thm:main}. The first group of results will be deduced only using the fact that
\begin{equation}
\label{eq:fbi}
0 \, \leq \, - {F_\beta'}(1)
\,\,=\,\,  \beta \,
\int\limits_{ \pa \Omega } \,  |\DDD u|^\beta
\left[\, \HHH-
\big(\tfrac{n-1}{n-2}\big)
\, {|\DDD u|}
\, \right] 
 \, \rmd \sigma \, ,
\end{equation}
for every $\beta \geq (n-2)/(n-1)$, whereas the geometric inequalities of the second group (Theorem~\ref{cor:will-def} and Theorem~\ref{cor:mink-def}) will make a substantial use of the fact that 
\begin{equation}
\label{eq:fbii}
\left[\, {{\rm Cap}(\Omega)} \, \right]^{\frac{n-2-\beta}{n-2}} 
\!(n-2)^{\beta+1}\, |\Sph^{n-1}| \,\, =  \lim_{\tau \to + \infty}F_\beta(\tau) \,\,\leq \,\, F_\beta(1) \, = \, \int\limits_{ \pa \Omega} 
|\D u|^{\beta+1} \, \rmd \sigma
\,,
\end{equation}
where the limit has already been discussed in~\eqref{eq:u_exp}-\eqref{eq:lim_fb}.


\subsection{Consequences at the boundary}
\label{sub:con_bound}

We begin with the following sharp inequality, which says that the $L^p$-norm of the normal derivative at $\pa \Omega$ is always bounded above by the $L^p$-norm of the mean curvature, provided $p \geq 2- 1/(n-1)$.

\begin{theorem}
\label{cor:Lp_u_normal}
Let $u$ be a solution to problem~\eqref{eq:pb}.
Then, for every $p \geq 2-1/(n-1)$ the inequality
\begin{equation}
\label{eq:Lp_normal}
  \left\|  \frac{ \, \pa u \, }{\pa \nu} \right\|_{L^p( \pa \Omega)}  \!\!\!\!\! \leq \,\, \Big( \frac{n-2}{n-1} \Big) \,\, \big\|  {\HHH}  \big\|_{L^p(\pa \Omega)} 
\phantom{\quad}
\end{equation}
holds true, where 
$\HHH$ is the mean curvature of $\pa\Omega$ and $\nu$ is the unit normal vector of $\pa\Omega$ pointing toward the interior of $\R^n\setminus\overline\Omega$. 
Moreover, the equality is fulfilled for some $p \geq 2-1/(n-1)$ 
if and only if $u$ is rotationally symmetric. 
\end{theorem}
\begin{proof}
Setting $p= \beta +1$ in~\eqref{eq:fbi} and rearranging the terms, one gets
\begin{equation*}
 \int\limits_{\pa \Omega}  {  \, |\D  u|^p} \, \rmd \sigma \,\,\, \leq  \,\,\, \Big(\frac{n-2}{n-1} \Big) \, \int\limits_{\pa \Omega}  |\D u|^{p-1} {\HHH} \, \rmd \sigma \, .
\end{equation*}
The thesis follows from the H\"older inequality. The rigidity statement is a consequence of the rigidity  of the equality case in inequality~\eqref{eq:fbi}, which follows in turns from the Rigidity statement in Theorem~\ref{thm:main}-(ii).
\end{proof}
To proceed, we observe that letting $p \to + \infty$ in the previous inequality, one also obtains that 
\begin{equation}
\label{eq:Linf_normal}
\phantom{\!\!\!\!\!\!} \max_{\pa \Omega} \left| \frac{\pa u}{ \pa \nu} \right| \,\, \leq \,\, \Big( \frac{n-2}{n-1} \Big) \,\, \max_{\pa \Omega}  \big| {\HHH}  \big| \, .
\end{equation}
Unfortunately the corresponding rigidity statement does not survive the passage to the limit. However, one can recover the validity of such a statement either invoking the results in~\cite{Bor_Mas_Maz}, or using the following sharp gradient estimate together with the subsequent corollary

\begin{theorem}
[Sharp gradient estimate \`a la Colding]
\label{thm:sharp_bound_u}
Let $u$ be a solution to problem~\eqref{eq:pb}.
Then, for every $x\in\R^n\setminus\Omega$,
the inequality
\begin{equation*}
{|{\rm D}u|}(x)
\,\leq\,
\Big(\!\max_{\pa\Omega}{|{\rm D}u|}\Big) \,\, 
u^{\frac{n-1}{n-2}}(x)
\end{equation*}
holds true. Moreover, the equality is achieved at some point in $\R^n\setminus\overline\Omega$
if and only if $u$ is rotationally symmetric.  
\end{theorem}
\begin{proof}
Using the expansion~\eqref{eq:u_exp} in combination with~\eqref{eq:tilde_g} and~\eqref{def:ffi}, it is immediate to deduce that $|\na \ffi|_g^2 = O(1)$, as $x \to \infty$.
Hence, the hypothesis of Proposition~\ref{thm:sharp_bound_fi} are largely satisfied. The thesis can be now easily deduced from~\eqref{gra_est}, with the help of~\eqref{eq:P_function}.
\end{proof}

From inequality \eqref{eq:Linf_normal} 
in Corollary~\ref{cor:Lp_u_normal} and from 
Theorem~\ref{thm:sharp_bound_u} above, it is immediate to deduce the following Corollary.

\begin{corollary}
Let $u$ be a solution to problem~\eqref{eq:pb}.
Then, for every $x\in\R^n\setminus\Omega$,
the inequality
\begin{equation*}
\qquad \quad {|{\rm D}u|}(x)
\,\leq\,
 \bigg(\frac{n-2}{n-1}\bigg) \,\Big(\!\max_{\pa\Omega}{|\HHH|}\Big) \,\, 
u^{\frac{n-1}{n-2}}(x)
\end{equation*}
holds true. Moreover, the equality is achieved at some point in $\R^n\setminus\overline\Omega$
if and only if $u$ is rotationally symmetric.
\end{corollary}
In particular, the equality case in~\eqref{eq:Linf_normal} is characterized by a geometric rigidity, as desired.

Recalling that the electrostatic capacity of a charged body $\Omega$ can be computed in terms of the exterior normal derivative as
\begin{equation*}
{\rm Cap} (\Omega) \,\, = \,\,  - \, \frac{1}{(n-2) |\Sph^{n-1}|}\int_{ \pa \Omega} \!\! \frac{ \, \pa u \, }{\pa \nu} \, \rmd \sigma \,,\phantom{\quad\quad}
\end{equation*}
and using the H\"older inequality, it is not hard to deduce from~\eqref{eq:Lp_normal} and~\eqref{eq:Linf_normal} the following geometric upper bounds for the capacity.
Observe that $\pa u/\pa\nu=-|\D u|<0$ on $\pa\Omega$,
due to the definition of $\nu$. 
Other geometric upper and lower bounds
for the capacity are obtained for example
in~\cite{Bra_Mia,Fre_Sch,Hur_Pal_Rit,Xiao}.

\begin{corollary}
\label{cor:capupper}
Let $\Omega \subset \mathbb{R}^n$, $n\geq3$,
be a bounded domain with smooth boundary. 
Then, for every $p\geq 2-1/(n-1)$, 
the inequality
\begin{equation}
\label{eq:cap_p_upper}
{\rm Cap} (\Omega) \,\, \leq \,\, 
\frac{|\pa \Omega|}{|\Sph^{n-1}|} 
\,\left(   \fint_{ \pa \Omega}  \left|\frac{\HHH}{n-1}\right|^{p}  \rmd \sigma \right)^{\!1/p}
\end{equation}
holds true, where $\HHH$ is the mean curvature of $\pa \Omega$. Moreover, the equality is fulfilled for some $p\geq 2-1/(n-1)$ 
if and only if $\Omega$ is a round ball. 
Finally, letting $p \to + \infty$ in the previous inequality, one has that 
\begin{equation}
\label{eq:capinf_upper}
{\rm Cap} (\Omega) \,\, \leq \,\, 
\frac{|\pa \Omega|}{|\Sph^{n-1}|} 
\,\, \max_{\pa \Omega}  \bigg| \frac{\HHH}{n-1}  \bigg| \, .
\phantom{\qquad\quad\,}
\end{equation}
Moreover, the equality is fulfilled if and only if $\Omega$ is a round ball.
\end{corollary}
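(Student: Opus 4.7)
The plan is to combine the boundary integral representation of ${\rm Cap}(\Om)$ with the sharp $L^p$ bound from Corollary \ref{cor:Lp_u_normal}, using H\"older's inequality to produce the required power of $|\pa\Om|$.

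First I would rewrite the capacity as
\[
{\rm Cap}(\Om) \,=\, \frac{1}{(n-2)|\Sph^{n-1}|}\int_{\pa\Om} |\D u|\,\rmd\sigma,
\]
starting from the integral formula recalled right above the statement and using $\pa u/\pa\nu = -|\D u|$ on $\pa\Om$ (the sign remark explicitly made in the excerpt). For fixed $p \geq 2-1/(n-1)$, H\"older's inequality with conjugate exponents $p/(p-1)$ and $p$ gives
\[
\int_{\pa\Om} |\D u|\,\rmd\sigma \,\leq\, |\pa\Om|^{(p-1)/p}\,\big\||\D u|\big\|_{L^p(\pa\Om)},
\]
and I would then plug in $\big\||\D u|\big\|_{L^p(\pa\Om)} \leq \tfrac{n-2}{n-1}\,\|\HHH\|_{L^p(\pa\Om)}$ from Corollary \ref{cor:Lp_u_normal}. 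A rearrangement reproduces \eqref{eq:cap_p_upper} exactly. For the equality case at finite $p\geq 2-1/(n-1)$, saturation of the chain requires in particular equality in Corollary \ref{cor:Lp_u_normal}, whose rigidity statement already forces $u$ to be rotationally symmetric and hence $\Om$ to be a round ball; the converse direction is verified directly on the explicit Green function solution.

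For the $L^\infty$ inequality \eqref{eq:capinf_upper}, I would pass to the limit $p\to+\infty$ in \eqref{eq:cap_p_upper}, relying on the elementary fact that $\bigl(\fint_{\pa\Om}|f|^p\,\rmd\sigma\bigr)^{1/p} \to \max_{\pa\Om}|f|$ for any $f$ continuous on the compact set $\pa\Om$, applied with $f=\HHH/(n-1)$.

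The hard part is the rigidity at $p=+\infty$, since the corresponding rigidity in Corollary \ref{cor:Lp_u_normal} is explicitly left open in the preceding remark. I plan to handle it directly, by examining equality along the two-step chain
\[
{\rm Cap}(\Om) \,=\, \frac{\int_{\pa\Om}|\D u|\,\rmd\sigma}{(n-2)|\Sph^{n-1}|} \,\leq\, \frac{|\pa\Om|\,\max_{\pa\Om}|\D u|}{(n-2)|\Sph^{n-1}|} \,\leq\, \frac{|\pa\Om|\,\max_{\pa\Om}|\HHH|}{(n-1)|\Sph^{n-1}|}.
\]
The first step is an equality only if $|\D u|$, which is continuous on $\pa\Om$ thanks to the $\mathscr{C}^{2,\a}$ boundary regularity assumed in \eqref{eq:pb}, is constant on $\pa\Om$. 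This is precisely the constant-normal-derivative overdetermined condition, so by Reichel's theorem (cited in the discussion after Corollary \ref{cor:over_u}) $\Om$ must be a round ball; the reverse implication is again routine verification on the Green function.
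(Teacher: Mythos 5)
Your proposal is correct and follows the route the paper itself indicates: the boundary integral formula for ${\rm Cap}(\Omega)$, H\"older's inequality with exponents $p/(p-1)$ and $p$, and then \eqref{eq:Lp_normal} together with its rigidity statement for the finite-$p$ equality case. The only place the paper is silent is the rigidity in \eqref{eq:capinf_upper}, and your argument there — equality forces equality in the $L^1$--$L^\infty$ H\"older step, hence $|\D u|$ is constant on $\pa\Omega$ by continuity, and Reichel's theorem (which the paper cites precisely for the constant-normal-derivative overdetermined problem) then yields that $\Omega$ is a ball — is valid and supplies exactly the missing detail, correctly sidestepping the open rigidity of \eqref{eq:Linf_normal}.
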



\subsection{Global geometric consequences}
\label{sub:con_glob}


So far we have used the local feature of the monotonicity, namely inequality~\eqref{eq:fbi}, to prove a first group of corollaries of Theorem~\ref{thm:main}. 
To deduce further consequences of our main theorem, 
we now exploit the global feature of the monotonicity, comparing the behaviour of our quantities at the boundary and at large level sets of $u$, with the help of~\eqref{eq:fbii}. We start with the proof of the Weighted Minkowski Inequality.

\begin{proof}[proof of Theorem~\ref{cor:mink-def}]
Multiplying inequality~\eqref{eq:fbii} by $(n-1)/(n-2)$, we obtain 
\begin{equation*}
\left[\, {{\rm Cap}(\Omega)} \, \right]^{\frac{n-2-\beta}{n-2}} \!
(n-1)\,(n-2)^{\beta}\, |\Sph^{n-1}|  \,\,\leq \int\limits_{ \pa \Omega} 
|\D u|^{\beta}\,  \HHH \, \rmd \sigma\,\,  -\!\!
\int\limits_{ \pa \Omega } \,  |\DDD u|^\beta
\left[\, \HHH-
\big(\tfrac{n-1}{n-2}\big)
\, {|\DDD u|}
\, \right] 
 \, \rmd \sigma
\,,
\end{equation*}
where we have added and subtracted the quantity $\int_{ \pa \Omega} 
|\D u|^{\beta}\,  \HHH \, \rmd \sigma $ to the right hand side. A simple rearrangements of the terms leads to
\begin{equation*}
\int\limits_{ \pa \Omega } \,  \left|\frac{\DDD u}{n-2}\right|^\beta
\left[\, \frac{\HHH}{n-1} \,-
\, {\left|\frac{\DDD u}{n-2}\right|}
\, \right] 
 \, \rmd \sigma \,\, \leq \,\, \int\limits_{ \pa \Omega} \,
 \left|\frac{\DDD u}{n-2}\right|^\beta \!  \frac{\HHH}{n-1} \, \rmd \sigma  \, - \, \left[\, {{\rm Cap}(\Omega)} \, \right]^{\frac{n-2-\beta}{n-2}}\, |\Sph^{n-1}| 
\end{equation*}
Notice that the left hand side is nonnegative in view of Theorem~\ref{thm:main}-(ii).
Setting $\beta= (n-2)/(n-1)$, the last summand on the right hand side can be rewritten as
\begin{equation*}
\left[\, {{\rm Cap}(\Omega)} \, \right]^{\frac{n-1}{n-2}}\, |\Sph^{n-1}| \, = \, \left( \fint_{\pa \Omega }\left| \frac{\DDD u}{n-2} \right| \rmd \sigma\right)^{\!\! \frac{n-2}{n-1}} \! |\pa \Omega|^{\frac{n-2}{n-1}} \,\, |\Sph^{n-1}|^{\frac{1}{n-1}} \, .
\end{equation*}
Substituting this expression in the last inequality, we arrive with some algebraic manipulations at
\begin{equation*}
\int\limits_{ \pa \Omega } \!  
\left[\, \frac{\HHH}{n-1} \,-
\, {\left|\frac{\DDD u}{n-2}\right|}
\, \right] 
\left(\frac{|\DDD u|}{\fint_{\pa \Omega }\left| {\DDD u} \right| \rmd \sigma}\right)^{\!\!\frac{n-2}{n-1}}  \!\! \rmd \sigma  
\,\leq \,
\int\limits_{ \pa \Omega} 
\!   \frac{\HHH}{n-1}
\left(\frac{|\DDD u|}{\fint_{\pa \Omega }\left| {\DDD u} \right| \rmd \sigma}\right)^{\!\!\frac{n-2}{n-1}}  \!\! \rmd \sigma \,\,\, - \,\,\, |\pa \Omega|^{\frac{n-2}{n-1}} \, |\Sph^{n-1}|^{\frac{1}{n-1}}
\end{equation*}
Recalling the definition of the measure $\overline\sigma$, it is then easy to realise that the last inequality coincides with~\eqref{eq:minki}. The rigidity follows at once by the rigidity statement in Theorem~\ref{thm:main}-(ii).
\end{proof}
We are now ready to deduce the Quantitative Willmore-type Inequality.

\begin{proof}[proof of Theorem~\ref{cor:will-def}]
Multiplying the last inequality in the proof of Theorem~\ref{cor:mink-def} by the factor
\begin{equation*}
\left(\fint_{\pa \Omega }\left| {\DDD u} \right| \rmd \sigma\right)^{\!\!\frac{n-2}{n-1}} \, ,
\end{equation*}
and using~\eqref{formula_cap}, we obtain 
\begin{align*}
\int\limits_{ \pa \Omega }  
\left[\, \frac{\HHH}{n-1} \,-
\, {\left|\frac{\DDD u}{n-2}\right|}
\, \right] 
{|\DDD u|}^{\frac{n-2}{n-1}}  \,\, \rmd \sigma  
\,\, \leq \,\,
\int\limits_{ \pa \Omega} 
\!   \frac{\HHH}{n-1} \,\,  {|\DDD u|}^{\frac{n-2}{n-1}}
\,\,\rmd \sigma \,\, - \,\,  
|\Sph^{n-1}|^{{1}/({n-1})}
\left(\int_{\pa \Omega }\left| {\DDD u} \right| \rmd \sigma\right)^{\!\!\frac{n-2}{n-1}} \\
 \leq \,\, 
\left[   \left( \int_{ \pa \Omega}  \left|\frac{\HHH}{n-1}\right|^{n-1} \!\! \rmd \sigma \right)^{\!\!1/(n-1)} \!\!\!\!\!\!\!\!\!\!\!\! - \,\,
|\Sph^{n-1}|^{{1}/({n-1})} \right]
\left(\int_{\pa \Omega }\left| {\DDD u} \right| \rmd \sigma\right)^{\!\!\frac{n-2}{n-1}} ,\phantom{\qquad\qquad\qquad}
\end{align*}
where in the last inequality we used the H\"older Inequality. The thesis~\eqref{eq:will} follows now from simple algebraic manipulations, noticing that the leftmost hand side is nonnegative by Theorem~\ref{thm:main}-(ii). The rigidity also follows at once by the rigidity statement in Theorem~\ref{thm:main}-(ii).
\end{proof}


\subsection{Quantitative Willmore Inequality in $3$-D} 


We conclude with the observation that for $n=3$ one can provide a different quantitative version of the Willmore Inequality, beside the one that 
follows from the general statement of Theorem~\ref{cor:will-def}. For the ease of reference, we recall that setting $n=3$ in~\eqref{eq:will} one gets
\begin{equation}
\label{eq:will3}
\int\limits_{\pa \Omega} \sqrt{\frac{|\D u|}{4 \pi \, {\rm Cap} (\Omega)}} \,\,  \left( \HHH - 2 |\D u| \right) \, \rmd \sigma \,\,\, \leq \,\,\,\,   \left( \int_{ \pa \Omega} \!\! {\HHH}^2  \,\rmd \sigma \right)^{\!\!1/2} \! - \,\,\, \sqrt{16 \pi} \,,
\end{equation}
where the deficit on the left hand side is nonnegative and optimal, due to Theorem~\ref{thm:main}-(ii). Here optimal means that as soon as it is zero, the right hand side is also zero. 

A slightly different path leads to the following statement, in which the deficit is still optimal, although different.
\begin{corollary}
Let $\Omega \subseteq \R^3$
be a bounded domain with smooth boundary. Then, the inequality	
\begin{equation}
\label{eq:will3bis}
\int\limits_{\pa \Omega}   \left( \HHH - 2 |\D u| \right)^2 \, \rmd \sigma \,\,\, \leq \,\,\, \int_{ \pa \Omega} \!\! {\HHH}^2  \,\rmd \sigma  \,\,  - \,\,\, {16 \pi} \,,
\end{equation}
holds true, where $\HHH$ is the mean curvature of $\pa \Omega$ and $u$ is the capacitary potential due to the uniformly charged body $\Omega$.
Moreover, the deficit on the left hand side is optimal in the sense that if it vanishes, then the right hand side also vanishes and $\Omega$ is a round ball.
\end{corollary}
\begin{proof}
Notice that for $n=3$ and $\beta = 1$ the inequalities~\eqref{eq:fbi} and~\eqref{eq:fbii} give 
\begin{equation}
4\pi \,\, \leq  \, \int\limits_{\pa \Omega}    |\D u|^2 \, \rmd \sigma  \, \qquad \hbox{and} \qquad 0 \,\, \leq  \int\limits_{\pa \Omega}   |\D u | \, \left( \HHH - 2 |\D u| \right) \, \rmd \sigma \, .
\end{equation}
Starting from these results, it is possible to deduce at once the following chain of inequalities
\begin{align}
16 \pi \,\, \leq  & \,\,\,  4  \int\limits_{\pa \Omega}    |\D u|^2 \, \rmd \sigma \,\, \leq \,\,  2 \int\limits_{\pa \Omega}    |\D u| \, \HHH \, \rmd \sigma \, + \, 2 \int\limits_{\pa \Omega}   |\D u | \, \left( \HHH - 2 |\D u| \right) \, \rmd \sigma  \,\, = \\
= & \,\,\, 4  \int\limits_{\pa \Omega}   |\D u | \, \left( \HHH -  |\D u| \right) \, \rmd \sigma  \,\, = \,\, \int\limits_{\pa \Omega}    \HHH^2 \, \rmd \sigma \,\, -  \int\limits_{\pa \Omega}   \left( \HHH - 2 |\D u| \right)^2 \, \rmd \sigma \,. 
\end{align}
The thesis follows by a simple rearramgement. If the deficit vanishes, then $\HHH \equiv 2 |\D u|$ on $\pa \Omega$, and this triggers the Rigidity statement in Theorem~\ref{thm:main}-(ii).
\end{proof}


\subsection*{Acknowledgements}
\emph{The author are grateful to G. Crasta, A.~Farina, I. Fragal\`a, C. Mantegazza, J. Metzger, M. Novaga, and 
D.~Peralta-Salas for useful comments and discussions during the preparation of the paper.
The authors are members of Gruppo Nazionale per
l'Analisi Matematica, la Probabilit\`a e le loro Applicazioni (GNAMPA),
which is part of the Istituto Nazionale di Alta Matematica (INdAM),
and partially funded by the GNAMPA project
``Principi di fattorizzazione, formule di monotonia e disuguaglianze geometriche''.
}


\bibliographystyle{plain}

\begin{thebibliography}{10}

\bibitem{Ago_Fog_Maz_2}
V.~Agostiniani, M.~Fogagnolo, and L.~Mazzieri.
\newblock Minkowski inequality for mean convex domains.
\newblock In preparation.

\bibitem{Ago_Fog_Maz}
V.~Agostiniani, M.~Fogagnolo, and L.~Mazzieri.
\newblock Sharp geometric inequalities for closed hypersurfaces in manifolds
  with nonnegative ricci curvature.
\newblock ArXiv.

\bibitem{Ago_Maz_1}
V.~Agostiniani and L.~Mazzieri.
\newblock Riemannian aspects of potential theory.
\newblock {\em J. Math. Pures Appl.}, 104(3):561 -- 586, 2015.

\bibitem{Ago_Maz_3}
V.~Agostiniani and L.~Mazzieri.
\newblock Comparing monotonicity formulas for electrostatic potentials and
  static metrics.
\newblock {\em Atti Accad. Naz. Lincei Rend. Lincei Mat. Appl.}, 28(1):7--20,
  2017.

\bibitem{Ago_Maz_2}
V.~Agostiniani and L.~Mazzieri.
\newblock On the geometry of the level sets of bounded static potentials.
\newblock {\em Communications in Mathematical Physics}, 355(1):261--301, Oct
  2017.

\bibitem{Bor_Mas_Maz}
S.~Borghini, G.~Mascellani, and L.~Mazzieri.
\newblock Some sphere theorems in linear potential theory.
\newblock To appear on \emph{Transactions of the AMS}.

\bibitem{Bor_Maz_2}
S.~Borghini and L.~Mazzieri.
\newblock On the mass of static metrics with positive cosmological constant:
  {II}.
\newblock arXiv:1711.07024.

\bibitem{Bor_Maz_1}
S.~Borghini and L.~Mazzieri.
\newblock On the mass of static metrics with positive cosmological constant:
  {I}.
\newblock {\em Classical and Quantum Gravity}, 35(12):125001, 2018.

\bibitem{Bou_Car}
V.~Bour and G.~Carron.
\newblock Optimal integral pinching results.
\newblock arXiv:1203.0384.

\bibitem{Bra_Mia}
H.~L. Bray and P.~Miao.
\newblock On the capacity of surfaces in manifolds with nonnegative scalar
  curvature.
\newblock {\em Inventiones mathematicae}, 172(3):459--475, 2008.

\bibitem{Brendle}
S.~Brendle, P.-K. Hung, and M.-T. Wang.
\newblock A {M}inkowski inequality for hypersurfaces in the {A}nti-de
  {S}itter-{S}chwarzschild manifold.
\newblock {\em Communications on Pure and Applied Mathematics}, 69(1):124--144,
  2016.

\bibitem{Caf_Fri}
L.~A. Caffarelli and A.~Friedman.
\newblock Partial regularity of the zero-set of solutions of linear and
  superlinear elliptic equations.
\newblock {\em Journal of Differential Equations}, 60(3):420--433, 1985.

\bibitem{Chang_Wang_2011}
S.-Y.~A. Chang and Y.~Wang.
\newblock On {A}leksandrov-{F}enchel inequalities for k-convex domains.
\newblock {\em Milan Journal of Mathematics}, 79(1):13, 2011.

\bibitem{Chang_Wang_2013}
S.-Y.~A. Chang and Y.~Wang.
\newblock Inequalities for quermassintegrals on k-convex domains.
\newblock {\em Advances in Mathematics}, 248:335 -- 377, 2013.

\bibitem{Che_Nab_Val}
J.~Cheeger, A.~Naber, and D.~Valtorta.
\newblock Critical sets of elliptic equations.
\newblock arXiv:1207.4236v3.

\bibitem{Chen_2}
B.-Y. Chen.
\newblock On a theorem of {F}enchel-{B}orsuk-{W}illmore-{C}hern-{L}ashof.
\newblock {\em Mathematische Annalen}, 194(1):19--26, 1971.

\bibitem{Chen_1}
B.-Y. Chen.
\newblock On the total curvature of immersed manifolds, {I}: {A}n inequality of
  {F}enchel-{B}orsuk-{W}illmore.
\newblock {\em American Journal of Mathematics}, 93(1):148--162, 1971.

\bibitem{Colding_1}
T.~H. Colding.
\newblock New monotonicity formulas for {R}icci curvature and applications.
  {I}.
\newblock {\em Acta Mathematica}, 209(2):229--263, 2012.

\bibitem{Colding_Minicozzi_2}
T.~H. Colding and W.~P. Minicozzi.
\newblock Monotonicity and its analytic and geometric implications.
\newblock {\em Proceedings of the National Academy of Sciences},
  110(48):19233--19236, 2013.

\bibitem{Colding_Minicozzi}
T.~H. Colding and W.~P. Minicozzi.
\newblock Ricci curvature and monotonicity for harmonic functions.
\newblock {\em Calculus of Variations and Partial Differential Equations},
  49(3):1045--1059, 2014.

\bibitem{Cra_Fra_Gaz}
G.~Crasta, I.~Fragal{\`a}, and F.~Gazzola.
\newblock On a long-standing conjecture by {P}\'olya-{S}zeg\"o and related
  topics.
\newblock {\em Z. Angew. Math. Phys.}, 56(5):763--782, 2005.

\bibitem{Enc_Per}
A.~Enciso and D.~Peralta-Salas.
\newblock Symmetry for an overdetermined boundary problem in a punctured
  domain.
\newblock {\em Nonlinear Anal.}, 70(2):1080--1086, 2009.

\bibitem{Farina}
A.~Farina, L.~Mari, and E.~Valdinoci.
\newblock Splitting theorems, symmetry results and overdetermined problems for
  {R}iemannian manifolds.
\newblock {\em Comm. Partial Differential Equations}, 38(10):1818--1862, 2013.

\bibitem{Fog_Maz_Pin}
M.~Fogagnolo, L.~Mazzieri, and A.~Pinamonti.
\newblock Geometric aspects of p-capacitary potentials.
\newblock To appear on Annales de l'IHP C, Analyse Nonlin\'eaire.

\bibitem{Fra_Gaz_2008}
I.~Fragal{\`a} and F.~Gazzola.
\newblock Partially overdetermined elliptic boundary value problems.
\newblock {\em J. Differential Equations}, 245(5):1299--1322, 2008.

\bibitem{Fra_Gaz_Kaw}
I.~Fragal{\`a}, F.~Gazzola, and B.~Kawohl.
\newblock Overdetermined problems with possibly degenerate ellipticity, a
  geometric approach.
\newblock {\em Math. Z.}, 254(1):117--132, 2006.

\bibitem{Fre_Sch}
A.~Freire and F.~Schwartz.
\newblock Mass-capacity inequalities for conformally flat manifolds with
  boundary.
\newblock {\em Communications in Partial Differential Equations},
  39(1):98--119, 2014.

\bibitem{Gar_Sar}
N.~Garofalo and E.~Sartori.
\newblock Symmetry in exterior boundary value problems for quasilinear elliptic
  equations via blow-up and a priori estimates.
\newblock {\em Adv. Differential Equations}, 4(2):137--161, 1999.

\bibitem{Ger}
C.~Gerhardt.
\newblock Flow of nonconvex hypersurfaces into spheres.
\newblock {\em J. Differential Geom.}, 32(1):299--314, 1990.

\bibitem{Guan_Li}
P.~Guan and J.~Li.
\newblock The quermassintegral inequalities for k-convex starshaped domains.
\newblock {\em Advances in Mathematics}, 221(5):1725 -- 1732, 2009.

\bibitem{Nadirashvili}
R.~Hardt, M.~Hoffmann-Ostenhof, T.~Hoffmann-Ostenhof, and N.~Nadirashvili.
\newblock Critical sets of solutions to elliptic equations.
\newblock {\em J. Differential Geom.}, 51(2):359--373, 1999.

\bibitem{Hardt_Simon}
R.~Hardt and L.~Simon.
\newblock Nodal sets for solutions of elliptic equations.
\newblock {\em J. Differential Geom.}, 30(2):505--522, 1989.

\bibitem{Hui_video}
G.~Huisken.
\newblock An isoperimetric concept for the mass in general relativity.
\newblock Video, available at https://video.ias.edu/node/234.

\bibitem{huisken1990}
G.~Huisken.
\newblock Asymptotic behavior for singularities of the mean curvature flow.
\newblock {\em J. Differential Geom.}, 31(1):285--299, 1990.

\bibitem{Hui_Ilm}
G.~Huisken and T.~Ilmanen.
\newblock The inverse mean curvature flow and the riemannian penrose
  inequality.
\newblock {\em J. Differential Geom.}, 59(3):353--437, 11 2001.

\bibitem{Hur_Pal_Rit}
A.~Hurtado, V.~Palmer, and M.~Ritor\'e.
\newblock Comparison results for capacity.
\newblock {\em Indiana University Mathematics Journal}, 61(2):539--555, 2012.

\bibitem{Kellogg}
O.~D. Kellogg.
\newblock {\em Foundations of potential theory}.
\newblock Springer, 1967.

\bibitem{Mar_Nev}
F.~C. Marques and A.~Neves.
\newblock {M}in-{M}ax theory and the {W}illmore conjecture.
\newblock {\em Annals of Mathematics}, 179(2):683--782, 2014.

\bibitem{Pay_Phi_1}
L.~E. Payne and G.~A. Philippin.
\newblock On some maximum principles involving harmonic functions and their
  derivatives.
\newblock {\em SIAM J. Math. Anal.}, 10(1):96--104, 1979.

\bibitem{Pay_Phi_2}
L.~E. Payne and G.~A. Philippin.
\newblock Some overdetermined boundary value problems for harmonic functions.
\newblock {\em Z. Angew. Math. Phys.}, 42(6):864--873, 1991.

\bibitem{Perel_1}
G.~Perelman.
\newblock The entropy formula for the {R}icci flow and its geometric
  applications.
\newblock arXiv:math/0211159.

\bibitem{Sou_Sou}
J.~Sou\u{c}ek and V.~Sou\u{c}ek.
\newblock Morse-{S}ard theorem for real-analytic functions.
\newblock {\em Comment. Math. Univ. Carolin.}, 13(1):45--51, 1972.

\bibitem{Urb}
J.~I.E. Urbas.
\newblock On the expansion of starshaped hypersurfaces by symmetric functions
  of their principal curvatures.
\newblock {\em Mathematische Zeitschrift}, 205(3):355--372, 1990.

\bibitem{Willmore}
T.~J. Willmore.
\newblock Mean curvature of immersed surfaces.
\newblock {\em An. \c Sti. Univ. ``All. I. Cuza'' Ia\c si Sec\c t. I a Mat.
  (N.S.)}, 14:99--103, 1968.

\bibitem{Xiao}
J.~Xiao.
\newblock P-capacity vs surface-area.
\newblock {\em Advances in Mathematics}, 308:1318 -- 1336, 2017.

\end{thebibliography}

\end{document}